\numberwithin{equation}{section}
\newtheorem{theorem}[equation]{Theorem}
\newtheorem*{theorem*}{Theorem} \newtheorem{lemma}[equation]{Lemma}
\newtheorem*{conjecture*}{Mamma Conjecture}
\newtheorem*{conjecture1*}{Mamma Conjecture (revisited)}
\newtheorem{proposition}[equation]{Proposition}
\newtheorem{corollary}[equation]{Corollary}
\newtheorem*{corollary*}{Corollary}
\theoremstyle{remark}
\newtheorem{example}[equation]{Example}
\theoremstyle{remark}
\newtheorem{remark}[equation]{Remark}
\newcommand{\cA}{{\mathcal A}}
\newcommand{\cB}{{\mathcal B}}
\newcommand{\cF}{{\mathcal F}}
\newcommand{\cG}{{\mathcal G}}
\newcommand{\cH}{{\mathcal H}}
\newcommand{\cL}{{\mathcal L}}
\newcommand{\cO}{{\mathcal O}}
\newcommand{\cQ}{{\mathcal Q}}
\newcommand{\cW}{{\mathcal W}}
\newcommand{\cX}{{\mathcal X}}
\newcommand{\cY}{{\mathcal Y}}
\newcommand{\cZ}{{\mathcal Z}}
\newcommand{\bbA}{\mathbb{A}}
\newcommand{\bbB}{\mathbb{B}}
\newcommand{\bbC}{\mathbb{C}}
\newcommand{\bbF}{\mathbb{F}}
\newcommand{\bbP}{\mathbb{P}}
\newcommand{\bbQ}{\mathbb{Q}}
\newcommand{\bbZ}{\mathbb{Z}}
\newcommand{\dgcat}{\mathrm{dgcat}} 
\newcommand{\perf}{\mathrm{perf}}
\newcommand{\dg}{\mathrm{dg}}
\newcommand{\Hom}{\mathrm{Hom}}
\newcommand{\too}{\longrightarrow}
\newcommand{\ie}{\textsl{i.e.}\ }
\let\oldmarginpar\marginpar
\def\marginpar#1{\oldmarginpar{\tiny #1}}
\begin{document}

\title[HPD-invariance of the Tate conjecture]{HPD-invariance of the Tate conjecture}
\author{Gon{\c c}alo~Tabuada}
\address{Gon{\c c}alo Tabuada, Department of Mathematics, MIT, Cambridge, MA 02139, USA}
\email{tabuada@math.mit.edu}
\urladdr{http://math.mit.edu/~tabuada}
\thanks{The author was partially supported by a NSF CAREER Award}

%
\date{\today}
%
\abstract{We prove that the Tate conjecture is invariant under Homological Projective Duality (=HPD). As an application, we prove the Tate conjecture in the new cases of linear sections of determinantal varieties, and also in the cases of complete intersections of two quadrics. Furthermore, we extend the Tate conjecture from schemes to stacks and prove it for certain global orbifolds.}}

\maketitle
\vskip-\baselineskip
\vskip-\baselineskip


\section{Introduction and statement of results}
Let $k=\bbF_q$ be a finite field of characteristic $p$, $X$ a smooth projective $k$-scheme, and $l\neq p$ a prime number. Given this data, consider the associated cycle class map
\begin{equation}\label{eq:class-map}
\cZ^\ast(X)_{\bbQ_l} \too H^{2\ast}_{l\text{-}\mathrm{adic}}(X_{\overline{k}}, \bbQ_l(\ast))^{\mathrm{Gal}(\overline{k}/k)}
\end{equation}
with values in $l$-adic cohomology theory. Motivated by his work on the Shafarevich group of abelian varieties, Tate \cite{Tate} conjectured in the sixties the following:
\vspace{0.2cm}

Conjecture $T^l(X)$: The cycle class map \eqref{eq:class-map} is surjective.

\vspace{0.2cm}

The conjecture $T^l(X)$ holds when $\mathrm{dim}(X)\leq 1$ and also for $K3$-surfaces; see \cite{Tate-motives,Totaro}. Besides these cases, it remains wide open; consult Theorem \ref{thm:last} below for a proof of the Tate conjecture in several new cases.

A {\em differential graded (=dg) category} $\cA$ is a category enriched over dg $k$-vector spaces; see Keller's ICM address \cite{ICM-Keller}. Following Kontsevich \cite{Miami,finMot,IAS}, $\cA$ is called {\em smooth} if it is perfect as a bimodule over itself and {\em proper} if $\sum_i \mathrm{dim}_k H^i \cA(x,y)< \infty$ for any two objects $x, y \in \cA$. A classical example of a smooth proper dg category is the unique dg enhancement $\perf_\dg(X)$ of the category of perfect complexes $\perf(X)$ of a smooth proper $k$-scheme $X$ (or stack $\cX$); see Lunts-Orlov \cite[Thm.~2.12]{LO}. 

Given a smooth proper dg category $\cA$, a prime number $l\neq p$, and an~integer~$n\geq~1$, consider the abelian group $H^l_n(\cA):=\Hom\left(\bbZ(l^\infty), \pi_{-1} L_{KU}K(\cA\otimes_k k_n)\right)$, where $\bbZ(l^\infty)$ stands for the Pr\"ufer $l$-group, $k_n$ for the field $\bbF_{q^n}$, $K(\cA\otimes_k k_n)$ for the algebraic $K$-theory spectrum of the dg category $\cA\otimes_k k_n$, and $L_{KU}K(\cA\otimes_k k_n)$ for the Bousfield localization of $K(\cA\otimes_k k_n)$ with respect to topological complex $K$-theory $KU$. Note that the abelian group $H^l_n(\cA)$ can be alternatively defined as the $l$-adic Tate module of $\pi_{-1} L_{KU}K(\cA\otimes_k k_n)$. Under the above notations, the Tate conjecture admits the following noncommutative analogue:

\vspace{0.2cm}

Conjecture $T^l_{\mathrm{nc}}(\cA)$: The abelian groups $H^l_n(\cA)$, $n \geq 1$, are trivial.

\vspace{0.2cm}

Note that the conjecture $T^l_{\mathrm{nc}}(\cA)$ holds, for example, whenever the abelian groups $\pi_{-1} L_{KU}K(\cA\otimes_k k_n), n \ge 1$, are finitely generated. The motivation for the preceding noncommutative Tate conjecture arose from the following result:
\begin{theorem}[Thomason]\label{thm:Thomason}
Given a smooth projective $k$-scheme $X$, we have the following equivalence of conjectures $T^l(X) \Leftrightarrow T^l_{\mathrm{nc}}(\perf_\dg(X))$.
\end{theorem}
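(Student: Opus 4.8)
The plan is to reduce everything to a comparison between the two sides of the cycle class map. First I would recall Thomason's theorem on homotopy fixed points / Bousfield localization: for a smooth projective $k$-scheme $X$, the $KU$-local $K$-theory $L_{KU}K(X)$ agrees with étale $K$-theory, and its homotopy groups are expressed, via the Atiyah–Hirzebruch-type descent spectral sequence, in terms of the $l$-adic étale cohomology $H^\ast_{l\text{-}\mathrm{adic}}(X_{\overline{k}},\bbZ_l(\ast))$ together with the Galois action of $\mathrm{Gal}(\overline{k}/k)$. Concretely, over the finite field $k=\bbF_q$ the Galois group is $\widehat{\bbZ}$, so the descent spectral sequence degenerates into short exact sequences relating $\pi_\ast$ of étale $K$-theory to the invariants and coinvariants of Frobenius acting on even-degree $l$-adic cohomology. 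This is the input that converts the $K$-theoretic group $H^l_n(\cA)$ with $\cA=\perf_\dg(X)$ into something purely cohomological.

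Second, I would make the identification $H^l_n(\perf_\dg(X))\cong \mathrm{Hom}(\bbZ(l^\infty),\pi_{-1}L_{KU}K(X_{k_n}))$ completely explicit. Using the short exact sequences from the descent spectral sequence, $\pi_{-1}L_{KU}K(X_{k_n})$ fits in an extension whose subgroup is the Frobenius-coinvariants $H^{2\ast}_{l\text{-}\mathrm{adic}}(X_{\overline{k}},\bbZ_l(\ast))_{\mathrm{Gal}}$ and whose quotient is the Frobenius-invariants of the odd cohomology. Taking the $l$-adic Tate module $\mathrm{Hom}(\bbZ(l^\infty),-)$ kills everything that is finitely generated over $\bbZ_l$, so what survives is governed by the \emph{divisible} part, equivalently by the failure of the cycle class map $\cZ^\ast(X_{k_n})_{\bbQ_l}\to H^{2\ast}_{l\text{-}\mathrm{adic}}(X_{\overline{k}},\bbQ_l(\ast))^{\mathrm{Gal}}$ to be surjective. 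The point is that, after tensoring with $\bbQ_l$, Frobenius-invariants and Frobenius-coinvariants of $H^{2\ast}$ have the same dimension and the image of the cycle class map lands in both; the Tate conjecture $T^l(X_{k_n})$ for all $n$ is then equivalent to the triviality of the relevant Tate modules. Here I would also need that if $T^l(X)$ holds then $T^l(X_{k_n})$ holds for all $n$ — this is the standard fact that the Tate conjecture over $\bbF_q$ propagates to finite extensions (via a transfer/norm argument), and conversely $T^l_{\mathrm{nc}}(\cA)$ is a statement about \emph{all} $k_n$ simultaneously, which matches.

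Third, I would assemble the equivalence: $T^l(X)$ holds $\iff$ $T^l(X_{k_n})$ holds for all $n\geq 1$ $\iff$ the divisible parts of the above cohomology groups vanish for all $n$ $\iff$ the groups $H^l_n(\perf_\dg(X))$ vanish for all $n$ $\iff$ $T^l_{\mathrm{nc}}(\perf_\dg(X))$. The main obstacle I expect is the careful bookkeeping in the middle step: one must track the Tate twists and the precise way the odd-degree cohomology (and the Frobenius action on it) enters $\pi_{-1}$ versus $\pi_{-2}$ of $L_{KU}K$, and verify that the Tate module of the odd part contributes nothing beyond what is already controlled — i.e. that the only obstruction to triviality of $H^l_n$ is exactly the even-degree cycle class map. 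This is essentially Thomason's analysis of étale $K$-theory over finite fields combined with the Quillen–Lichtenbaum comparison (now a theorem), so no genuinely new difficulty arises, but making the extension-group argument with $\mathrm{Hom}(\bbZ(l^\infty),-)$ airtight is where the real work lies.
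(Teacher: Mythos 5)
Your plan is essentially a re-derivation of Thomason's theorem, whereas the paper's proof is a two-line citation: it combines the canonical base-change Morita equivalence $\perf_\dg(X)\otimes_k k_n\simeq \perf_\dg(X_{k_n})$ with the main theorem of Thomason's paper, so all of the descent-spectral-sequence analysis you outline is already packaged in the cited result. Re-proving it is legitimate in principle, but one step of your sketch is stated backwards and would fail as written: the transfer/norm argument (averaging over $\mathrm{Gal}(k_n/k)$ and pushing cycles forward) shows that $T^l(X_{k_n})$ implies $T^l(X)$, i.e.\ it \emph{descends} the conjecture from a finite extension to the base field; it does not give the ascent $T^l(X)\Rightarrow T^l(X_{k_n})$ that you invoke as ``standard''. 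The ascent is genuinely non-formal: the $F^n$-invariants of $H^{2\ast}_{l\text{-}\mathrm{adic}}(X_{\overline{k}},\bbQ_l(\ast))$ can be strictly larger than the $F$-invariants (Frobenius eigenvalues of the form $q^i\zeta$ with $\zeta\neq 1$ a root of unity), and surjectivity of the cycle map onto the $F$-invariants says nothing about these extra classes. Since $T^l_{\mathrm{nc}}(\perf_\dg(X))$ quantifies over all $n\geq 1$, this is precisely the delicate point; it is dealt with inside Thomason's theorem (which treats all finite extensions simultaneously), not by a norm argument, so your middle step needs to be repaired or replaced by the citation.

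A second, smaller omission: you pass without comment from $H^l_n(\perf_\dg(X))=\Hom(\bbZ(l^\infty),\pi_{-1}L_{KU}K(\perf_\dg(X)\otimes_k k_n))$ to $\Hom(\bbZ(l^\infty),\pi_{-1}L_{KU}K(X_{k_n}))$. This requires the Morita equivalence $\perf_\dg(X)\otimes_k k_n\simeq\perf_\dg(X_{k_n})$ together with the agreement of the $K$-theory of the dg enhancement with Thomason--Trobaugh $K$-theory of the scheme; this identification is the only ingredient beyond Thomason's theorem that the paper's proof actually supplies, and it is absent from your plan. (Also, Quillen--Lichtenbaum is not needed: after Bousfield localization at $KU$ one is already working with \'etale, i.e.\ Bott-inverted, $K$-theory by Thomason's descent theorem.)
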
  
\begin{proof}
Combine the canonical Morita equivalence $\perf_\dg(X)\otimes_k k_n\simeq \perf_\dg(X_{k^n})$ (see \cite[Lem.~4.26]{Gysin}) with the main theorem in Thomason's work \cite{Thomason}.
\end{proof}
Theorem \ref{thm:Thomason} shows that the Tate conjecture belongs not only to the realm of algebraic geometry but also to the broad setting of (smooth proper) dg categories. Making use of this latter noncommutative viewpoint, we now prove that the Tate conjecture is invariant under Homological Projective Duality (=HPD); for a survey on HPD we invite the reader to consult Kuznetsov \cite{Kuznetsov-ICM} and/or Thomas \cite{Thomas}.

Let $X$ be a smooth projective $k$-scheme equipped with a line bundle $\cL_X(1)$; we write $X \to \bbP(V)$ for the associated morphism where $V:=H^0(X,\cL_X(1))^\ast$. Assume that the triangulated category $\perf(X)$ admits a Lefschetz decomposition $\langle \bbA_0, \bbA_1(1), \ldots, \bbA_{i-1}(i-1)\rangle$ with respect to $\cL_X(1)$ in the sense of \cite[Def.~4.1]{Kuznetsov-IHES}. Following \cite[Def.~6.1]{Kuznetsov-IHES}, let $Y$ be the HP-dual of $X$, $\cL_Y(1)$ the HP-dual line bundle, and $Y\to \bbP(V^\ast)$ the morphism associated to $\cL_Y(1)$. Given a linear subspace $L\subset V^\ast$, consider the linear sections $X_L:=X\times_{\bbP(V)}\bbP(L^\perp)$ and $Y_L:=Y \times_{\bbP(V^\ast)}\bbP(L)$. 
\begin{theorem}[HPD-invariance]\label{thm:main}
Let $X$ and $Y$ be as above. Assume that $X_L$ and $Y_L$ are smooth\footnote{The linear section $X_L$ is smooth if and only if the linear section $Y_L$ is smooth; see \cite[page~9]{Kuznetsov-ICM}.}, that $\mathrm{dim}(X_L)=\mathrm{dim}(X)-\mathrm{dim}(L)$ and $\mathrm{dim}(Y_L)=\mathrm{dim}(Y)-\mathrm{dim}(L^\perp)$, and that the conjecture $T^l_{\mathrm{nc}}(\bbA_0^\dg)$ holds, where $\bbA^{\mathrm{dg}}_0$ stands for the dg enhancement of $\bbA_0$ induced from $\perf_\dg(X)$. Under these assumptions, we have the following equivalence of conjectures $T^l(X_L) \Leftrightarrow T^l(Y_L)$.
\end{theorem}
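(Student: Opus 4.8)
The plan is to transport the statement to the noncommutative world via Thomason's Theorem~\ref{thm:Thomason} and then feed it the semiorthogonal decompositions provided by HPD. The key algebraic input is an \emph{additivity principle}: if $\cA=\langle\cB_1,\dots,\cB_m\rangle$ is a semiorthogonal decomposition of smooth proper dg categories, then $K(\cA)\simeq\bigoplus_i K(\cB_i)$; since $-\otimes_k k_n$ preserves such decompositions (base change) and the Bousfield localization $L_{KU}$, the homotopy group $\pi_{-1}$, and the functor $\Hom(\bbZ(l^\infty),-)$ all preserve finite direct sums, it follows that $H^l_n(\cA)\simeq\bigoplus_i H^l_n(\cB_i)$ for every $n\geq 1$, and hence that $T^l_{\mathrm{nc}}(\cA)$ holds if and only if $T^l_{\mathrm{nc}}(\cB_i)$ holds for every $i$. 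In particular $T^l_{\mathrm{nc}}(\cA)\Rightarrow T^l_{\mathrm{nc}}(\cB)$ for every admissible dg subcategory $\cB\subseteq\cA$, and $T^l_{\mathrm{nc}}(\cB^\dg(j))\Leftrightarrow T^l_{\mathrm{nc}}(\cB^\dg)$ for every Tate twist (a twist being a dg autoequivalence). Applying this to the Lefschetz decomposition $\perf_\dg(X)=\langle\bbA_0^\dg,\bbA_1^\dg(1),\dots,\bbA_{i-1}^\dg(i-1)\rangle$, whose components form a chain $\bbA_{i-1}\subseteq\cdots\subseteq\bbA_0$ of admissible subcategories, the hypothesis $T^l_{\mathrm{nc}}(\bbA_0^\dg)$ yields $T^l_{\mathrm{nc}}(\bbA_k^\dg)$ for all $k$, and therefore $T^l_{\mathrm{nc}}(\perf_\dg(X))$.

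Next I would carry this over to the dual side. Let $\cH_X:=X\times_{\bbP(V)}\cQ$, where $\cQ\subset\bbP(V)\times\bbP(V^\ast)$ is the incidence divisor; the projection $\cH_X\to X$ is a projective bundle with fibres $\bbP^{\dim(V)-2}$, so $\cH_X$ is smooth projective and Orlov's projective bundle formula gives $\perf_\dg(\cH_X)\simeq\langle\perf_\dg(X),\dots,\perf_\dg(X)\rangle$ ($\dim(V)-1$ copies). By the very definition of the HP-dual \cite[Def.~6.1]{Kuznetsov-IHES}, $\perf(Y)$ is equivalent, through a Fourier--Mukai functor, to an admissible subcategory of $\perf(\cH_X)$; passing to dg enhancements and combining with the previous paragraph and additivity,
\[ T^l_{\mathrm{nc}}(\bbA_0^\dg)\ \Longrightarrow\ T^l_{\mathrm{nc}}(\perf_\dg(X))\ \Longleftrightarrow\ T^l_{\mathrm{nc}}(\perf_\dg(\cH_X))\ \Longrightarrow\ T^l_{\mathrm{nc}}(\perf_\dg(Y)). \]
Since the components $\bbB_k$ of the dual Lefschetz decomposition of $\perf(Y)$ again form a chain of admissible subcategories, this gives $T^l_{\mathrm{nc}}(\bbB_k^\dg)$ for all $k$.

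Finally I would invoke Kuznetsov's HPD theorem \cite[Thm.~6.3]{Kuznetsov-IHES} in its dg-enhanced form: under the stated hypotheses (smoothness of $X_L,Y_L$ and the two expected-dimension equalities) there is a dg category $\cC_L$ and semiorthogonal decompositions
\[ \perf_\dg(X_L)=\langle\cC_L,\bbA_r^\dg(1),\dots,\bbA_{i-1}^\dg(i-r)\rangle,\qquad \perf_\dg(Y_L)=\langle\bbB_{a}^\dg,\dots,\bbB_{j-1}^\dg,\cC_L\rangle, \]
with $r=\dim(L)$, a suitable $a=a(L)$, and immaterial Tate twists on the $\bbB$-components (and with no $\bbA$-, resp.\ $\bbB$-, components when $r\geq i$, resp.\ $a\geq j$); as $X_L$ and $Y_L$ are smooth projective, $\cC_L$ is smooth proper. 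Applying the additivity principle to both decompositions and using $T^l_{\mathrm{nc}}(\bbA_k^\dg)$ and $T^l_{\mathrm{nc}}(\bbB_k^\dg)$ for all $k$, we obtain
\[ T^l(X_L)\ \Leftrightarrow\ T^l_{\mathrm{nc}}(\perf_\dg(X_L))\ \Leftrightarrow\ T^l_{\mathrm{nc}}(\cC_L)\ \Leftrightarrow\ T^l_{\mathrm{nc}}(\perf_\dg(Y_L))\ \Leftrightarrow\ T^l(Y_L), \]
the outer equivalences being Theorem~\ref{thm:Thomason}; this is the asserted equivalence. In my view the delicate point is not the $K$-theoretic bookkeeping but securing the geometric inputs in the needed generality: that Kuznetsov's HPD theorem, and the identification of $\perf(Y)$ with an admissible subcategory of $\perf(\cH_X)$, are available over the finite field $\bbF_q$ and lift compatibly to dg enhancements, and above all that the dg category $\cC_L$ is literally common to the two decompositions displayed above, so that the two sides can be played off against one another.
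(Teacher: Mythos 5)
Your proposal is correct and follows the same overall strategy as the paper: Thomason's equivalence (Theorem~\ref{thm:Thomason}), the observation that $\cA\mapsto\pi_{-1}L_{KU}K(\cA\otimes_k k_n)$ is an additive invariant (the paper isolates this as Proposition~\ref{prop:additive}; your appeal to base change of semiorthogonal decompositions along $k\to k_n$ is the same point, made precise there via the upper-triangular gluing $T(\cA,\cB;\mathrm{B})$), and Kuznetsov's decompositions \eqref{eq:semi-1}--\eqref{eq:semi-2} with common component $\bbC_L$. The one step where you genuinely diverge is how the hypothesis $T^l_{\mathrm{nc}}(\bbA_0^\dg)$ is propagated to the dual side: the paper uses the primitive subcategories $\mathfrak{a}_r$ and the identities \eqref{eq:decomp1}--\eqref{eq:decomp2}, which exhibit every $\bbB_r$ as glued from admissible pieces of $\bbA_0$, whereas you pass through the universal hyperplane section $\cH_X$, Orlov's projective bundle formula, and the fact that $\perf(Y)$ is, by \cite[Def.~6.1]{Kuznetsov-IHES}, an admissible subcategory of $\perf(\cH_X)$; both are legitimate, yours trading the explicit description of the $\bbB_r$'s for an extra (harmless) enlargement to $\perf_\dg(\cH_X)$, the paper's staying entirely inside the Lefschetz data. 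Finally, the delicate point you flag -- that $\cC_L$ is only ``common'' to the two decompositions up to a Fourier--Mukai equivalence, so the two induced dg enhancements need not literally coincide -- is exactly how the paper closes the argument: since the comparison functor $\perf(X_L)\to\bbC_L\to\perf(Y_L)$ is of Fourier--Mukai type, the enhancements $\bbC_L^\dg$ and $\bbC_L^{\dg'}$ are Morita equivalent, and additive invariants (hence the conjectures $T^l_{\mathrm{nc}}$) cannot distinguish them; the same remark takes care of identifying the enhancements of the $\bbA_r$'s, $\bbB_r$'s (and, in your variant, of $\perf(Y)$ inside $\perf(\cH_X)$) induced from the various ambient dg categories, so your argument is complete once this Morita-invariance observation is added.
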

\begin{remark}\label{rk:singular}
\begin{itemize}
\item[(i)] Given a {\em general} subspace $L \subset V^\ast$, the sections $X_L$ and $Y_L$ are smooth, and $\mathrm{dim}(X_L)=\mathrm{dim}(X)-\mathrm{dim}(L)$ and $\mathrm{dim}(Y_L)=\mathrm{dim}(Y)-\mathrm{dim}(L^\perp)$. 
\item[(ii)] The conjecture $T^l_{\mathrm{nc}}(\bbA_0^{\mathrm{dg}})$ holds, in particular, whenever the triangulated category $\bbA_0$ admits a full exceptional collection.
\item[(iii)] Theorem \ref{thm:main} holds more generally when $Y$ is singular. In this case we need to replace $Y$ by a noncommutative resolution of singularities $\perf_\dg(Y;\cF)$, where $\cF$ stands for a certain sheaf of noncommutative algebras over $Y$ (see \cite[\S2.4]{Kuznetsov-ICM}), and conjecture $T^l(Y)$ by its noncommutative analogue $T^l_{\mathrm{nc}}(\perf_\dg(Y;\cF))$. 
\end{itemize}
\end{remark}
To the best of the author's knowledge, Theorem \ref{thm:main} is new in the literature. In what follows, we illustrate its strength in the case of two important HP-dualities.
\subsection*{Determinantal duality}
Let $U_1$ and $U_2$ be two $k$-vector spaces of dimensions $d_1$ and $d_2$, respectively, with $d_1\leq d_2$, $V:=U_1 \otimes U_2$, and $0 < r< d_1$ an integer. 

Consider the determinantal variety $\cZ^r_{d_1, d_2}\subset \bbP(V)$ defined as the locus of those matrices $U_2 \to U_1^\ast$ with rank $\leq r$. Recall that the determinantal varieties with $r=1$ are the classical Segre varieties. For example, $\cZ^1_{2,2}\subset \bbP^3$ is the quadric surface defined as the zero locus of the $2\times 2$ minor $v_0v_3 - v_1 v_2$. In contrast with the Segre varieties, the determinantal varieties $\cZ^r_{d_1, d_2}$, with $r\geq 2$, are not smooth. The singular locus of $\cZ^r_{d_1, d_2}$ consists of those matrices $U_2 \to U_1^\ast$ with rank $<r$, \ie it agrees with the closed subvariety $\cZ^{r-1}_{d_1, d_2}$. Nevertheless, it is well-known that $\cZ^r_{d_1, d_2}$ admits a canonical Springer resolution of singularities $\cX^r_{d_1, d_2} \to \cZ^r_{d_1, d_2}$, which comes equipped with a projection $q\colon \cX^r_{d_1, d_2} \to \mathrm{Gr}(r, U_1)$ to the Grassmannian of $r$-dimensional subspaces in $U_1$. Following \cite[\S3.3]{Marcello}, the category $\perf(X)$, with $X:=\cX^r_{d_1, d_2}$, admits a Lefschetz decomposition $\langle \bbA_0, \bbA_1(1), \ldots, \bbA_{d_2 r - 1}(d_2 r -1)\rangle$, where $\bbA_0=\bbA_1= \cdots = \bbA_{d_2 r -1}= q^\ast(\perf(\mathrm{Gr}(r,U_1)))\simeq \perf(\mathrm{Gr}(r,U_1))$. 
\begin{proposition}\label{prop:key}
The conjecture $T^l_{\mathrm{nc}}(\bbA_0^{\mathrm{dg}})$ holds.
\end{proposition}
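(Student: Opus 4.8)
The plan is to produce a full exceptional collection on the triangulated category $\bbA_0$ and then to appeal to Remark \ref{rk:singular}(ii). Recall from the discussion preceding the statement that $\bbA_0 = q^\ast(\perf(\mathrm{Gr}(r,U_1))) \simeq \perf(\mathrm{Gr}(r,U_1))$, where $\mathrm{Gr}(r,U_1)$ denotes the Grassmannian of $r$-dimensional subspaces of $U_1$. Hence it is enough to exhibit a full exceptional collection on $\perf(\mathrm{Gr}(r,U_1))$.

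For this I would invoke Kapranov's classical theorem \cite{Kapranov}: writing $\cS$ for the tautological rank-$r$ subbundle on $\mathrm{Gr}(r,U_1)$, the objects $\Sigma^\alpha \cS^\vee$, with $\alpha$ ranging over the Young diagrams contained in an $r\times(d_1-r)$ rectangle and $\Sigma^\alpha$ the associated Schur functor, form a full (indeed strong) exceptional collection of $\perf(\mathrm{Gr}(r,U_1))$ of length $\binom{d_1}{r}$. Transporting this collection along the equivalence $\bbA_0 \simeq \perf(\mathrm{Gr}(r,U_1))$, we conclude that $\bbA_0$ admits a full exceptional collection, and Remark \ref{rk:singular}(ii) then gives $T^l_{\mathrm{nc}}(\bbA_0^{\mathrm{dg}})$.

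Should one wish to avoid Remark \ref{rk:singular}(ii) and argue directly, the existence of a full exceptional collection of length $m:=\binom{d_1}{r}$ on $\bbA_0$ amounts to a semiorthogonal decomposition of $\bbA_0^{\mathrm{dg}}$ into $m$ copies of $\perf_\dg(\mathrm{Spec}(k))$. Base-changing along $-\otimes_k k_n$ (which preserves semiorthogonal decompositions), invoking the additivity of algebraic $K$-theory with respect to such decompositions, and using the exactness of the Bousfield localization $L_{KU}$, one gets $L_{KU}K(\bbA_0^{\mathrm{dg}}\otimes_k k_n) \simeq L_{KU}K(k_n)^{\oplus m}$, hence $\pi_{-1}L_{KU}K(\bbA_0^{\mathrm{dg}}\otimes_k k_n) \simeq (\pi_{-1}L_{KU}K(k_n))^{\oplus m}$. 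Since $\Hom(\bbZ(l^\infty),-)$ commutes with finite direct sums, this yields $H^l_n(\bbA_0^{\mathrm{dg}}) \simeq H^l_n(\perf_\dg(\mathrm{Spec}(k)))^{\oplus m}$ for every $n\geq 1$, and the right-hand side is trivial because $T^l_{\mathrm{nc}}(\perf_\dg(\mathrm{Spec}(k)))$ holds: indeed $\mathrm{Spec}(k)$ has dimension $0$, so $T^l(\mathrm{Spec}(k))$ is trivially true and Theorem \ref{thm:Thomason} applies. Therefore $T^l_{\mathrm{nc}}(\bbA_0^{\mathrm{dg}})$ holds.

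The main obstacle is minor: the whole argument rests on Kapranov's theorem, and the only point requiring a word of care is that it must be used over the finite base field $\bbF_q$ (and over $k_n$) rather than over $\bbC$. This is harmless, since the exceptional objects $\Sigma^\alpha\cS^\vee$ and the Beilinson-type resolution of the diagonal used to prove fullness are defined over $\mathrm{Spec}(\bbZ)$, so the statement holds over an arbitrary field.
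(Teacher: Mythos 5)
Your argument has a genuine gap, and it is precisely the one the paper warns about in the footnote to the determinantal-duality section. You invoke Kapranov's theorem \cite{Kapranov} over $k=\bbF_q$ and justify this by saying that the objects $\Sigma^\alpha\cS^\vee$ and the resolution of the diagonal are defined over $\mathrm{Spec}(\bbZ)$. Being defined over $\bbZ$ does not give what you need: the exceptionality ($\mathrm{Ext}$-vanishing) and fullness of Kapranov's collection rest on Bott/Kempf-type vanishing and on the Cauchy decomposition of Schur functors, which are characteristic-zero statements; the relevant cohomology over $\bbZ$ can have $p$-torsion, so semicontinuity only yields the required vanishing for all but finitely many primes, not for the given $p$. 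In small positive characteristic Kapranov's objects are known to fail to form an exceptional collection (this failure is the very motivation of \cite{VdB}), and no full exceptional collection on $\perf(\mathrm{Gr}(r,U_1))$ is provided by the reference you cite. Hence Remark \ref{rk:singular}(ii) cannot be fed with Kapranov's theorem, and your second, ``direct'' variant (base change of semiorthogonal decompositions, additivity of $K$-theory, exactness of $L_{KU}$, reduction to $T^l(\mathrm{Spec}(k))$) is sound in structure but rests on the same unproven input. If you want an exceptional-collection route over $\bbF_q$ you would need a genuinely characteristic-free existence theorem (a full exceptional collection on the Grassmannian over $\bbZ$), which is a much harder result than the one you quote.

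The paper circumvents this as follows: by \cite[Thms.~1.3 and 1.7]{VdB}, the dg category $\bbA_0^\dg=\perf_\dg(\mathrm{Gr}(r,U_1))$ is Morita equivalent to a finite-dimensional $k$-algebra $A$ of finite global dimension (a tilting bundle, not an exceptional collection). Since $k$ is perfect, \cite[Thm.~3.15]{Azumaya} gives $E_n(A)\simeq E_n(A/J(A))$ for the additive invariants \eqref{eq:additive}, so $T^l_{\mathrm{nc}}(A)\Leftrightarrow T^l_{\mathrm{nc}}(A/J(A))$; Artin--Wedderburn together with the triviality of the Brauer group of finite fields then reduces everything to a finite product of finite field extensions $Z_i$ of $k$, i.e.\ to the Tate conjecture for $0$-dimensional schemes, which holds. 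You should either adopt this tilting-bundle argument or replace your appeal to \cite{Kapranov} by a characteristic-free full exceptional collection, with an explicit justification of exceptionality over $\bbF_q$.
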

Dually, consider the variety $\cW^r_{d_1, d_2}\subset \bbP(V^\ast)$, defined as the locus of those matrices $U^\ast_2 \to U_1$ with corank $\geq r$, and the associated Springer resolutions of singularities $Y:=\cY^r_{d_1, d_2} \to \cW^r_{d_1, d_2}$. As proved in Bernardara-Bolognesi-Faenzi in\footnote{In \cite[Prop.~3.4 and Thm.~3.5]{Marcello} the authors worked over an algebraically closed field of characteristic zero. However, the same proof holds {\em mutatis mutandis} over $k=\bbF_q$. Simply replace the reference \cite{Kapranov} to Kapranov's full strong exceptional collection on $\perf(\mathrm{Gr}(r, U_1))$ by the reference \cite[Thm.~1.3]{VdB} to Buchweitz-Leuschke-Van den Bergh's tilting bundle on $\perf(\mathrm{Gr}(r, U_1))$. The author is grateful to Marcello Bernardara for discussions concerning this issue.} \cite[Prop.~3.4 and Thm.~3.5]{Marcello}, $X$ and $Y$ are HP-dual to each other. Given a general linear subspace $L \subseteq V^\ast$, consider the smooth linear sections $X_L$ and $Y_L$. Note that whenever $\bbP(L^\perp)$ does not intersects the singular locus of $\cZ^r_{d_1, d_2}$, \ie the closed subvariety $\cZ^{r-1}_{d_1, d_2}$, we have $X_L=\bbP(L^\perp) \cap \cZ^r_{d_1, d_2}$.\begin{corollary}\label{cor:last}
We have the equivalence $T^l(X_L)\Leftrightarrow T^l(Y_L)$.
\end{corollary}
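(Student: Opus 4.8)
The plan is to obtain Corollary \ref{cor:last} as a direct specialization of the HPD-invariance Theorem \ref{thm:main}, applied to the determinantal data $X = \cX^r_{d_1, d_2}$, $Y = \cY^r_{d_1, d_2}$, $V = U_1 \otimes U_2$, equipped with the line bundles and the morphisms $X \to \bbP(V)$, $Y \to \bbP(V^\ast)$ coming from the Springer resolutions, and with the Lefschetz decomposition $\langle \bbA_0, \bbA_1(1), \ldots, \bbA_{d_2 r - 1}(d_2 r - 1)\rangle$ of $\perf(X)$ recalled above, all of whose components satisfy $\bbA_j \simeq q^\ast(\perf(\mathrm{Gr}(r, U_1))) \simeq \perf(\mathrm{Gr}(r, U_1))$. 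By Bernardara--Bolognesi--Faenzi's results (valid over $k = \bbF_q$ thanks to the substitution of Buchweitz--Leuschke--Van den Bergh's tilting bundle for Kapranov's exceptional collection, as explained in the footnote), the schemes $X$ and $Y$ are genuinely HP-dual to one another. Since both Springer resolutions are smooth projective $k$-schemes, we are in the scheme-theoretic situation of Theorem \ref{thm:main} and need not appeal to the noncommutative resolution variant of Remark \ref{rk:singular}(iii).

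It then remains to verify the three running hypotheses of Theorem \ref{thm:main} for the chosen general subspace $L \subseteq V^\ast$. The smoothness of $X_L$ and $Y_L$, together with the dimension equalities $\mathrm{dim}(X_L) = \mathrm{dim}(X) - \mathrm{dim}(L)$ and $\mathrm{dim}(Y_L) = \mathrm{dim}(Y) - \mathrm{dim}(L^\perp)$, are guaranteed by the genericity of $L$ via Remark \ref{rk:singular}(i). The remaining hypothesis, that the conjecture $T^l_{\mathrm{nc}}(\bbA_0^{\mathrm{dg}})$ holds, is precisely the content of Proposition \ref{prop:key}. Feeding these three inputs into Theorem \ref{thm:main} produces the desired equivalence $T^l(X_L) \Leftrightarrow T^l(Y_L)$.

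The only point demanding a modicum of care --- and the one I would expect to be the (very mild) obstacle --- is the identification of the $\bbA_0$ of Proposition \ref{prop:key} with the dg category $\bbA_0^{\mathrm{dg}}$ to which Theorem \ref{thm:main} refers, i.e. checking that the dg enhancement of $\bbA_0$ induced from $\perf_\dg(X)$ is Morita equivalent to $\perf_\dg(\mathrm{Gr}(r, U_1))$. This follows from the equivalence $\bbA_0 = q^\ast(\perf(\mathrm{Gr}(r, U_1))) \simeq \perf(\mathrm{Gr}(r, U_1))$ recorded above together with the uniqueness of dg enhancements (Lunts--Orlov), so that the triangulated equivalence lifts to the dg level. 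With this bookkeeping settled, no further argument is required, and the corollary follows.
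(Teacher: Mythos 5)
Your proposal is correct and coincides with the paper's (implicit) argument: Corollary \ref{cor:last} is obtained exactly by feeding the Bernardara--Bolognesi--Faenzi HP-duality between the Springer resolutions, the genericity of $L$ (Remark \ref{rk:singular}(i)), and Proposition \ref{prop:key} for $\bbA_0^{\mathrm{dg}}\simeq\perf_\dg(\mathrm{Gr}(r,U_1))$ into Theorem \ref{thm:main}. The enhancement bookkeeping you flag is likewise how the paper treats it, so nothing is missing.
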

By construction, $\mathrm{dim}(X)= r(d_1+ d_2 -r)-1$ and $\mathrm{dim}(Y)= r(d_1-d_2-r) + d_1 d_2 -1$. Consequently, we have $\mathrm{dim}(X_L)= r(d_1+d_2-r) -1 - \mathrm{dim}(L)$ and $\mathrm{dim}(Y_L)= r(d_1 - d_2 - r) -1 +\mathrm{dim}(L)$. Since the Tate conjecture holds in dimensions $\leq 1$, we hence obtain the following result:
\begin{theorem}[Linear sections of determinantal varieties]\label{thm:last}
Let $X_L$ and $Y_L$ be smooth linear sections of determinantal varieties as in Corollary \ref{cor:last}.
\begin{itemize}
\item[(i)] Whenever $r(d_1 + d_2 -r) -1 - \mathrm{dim}(L)\leq 1$, the conjecture $T^l(Y_L)$ holds.
\item[(ii)] Whenever $r(d_1 -d_2 -r) -1 + \mathrm{dim}(L)\leq 1$, the conjecture $T^l(X_L)$ holds.
\end{itemize}
\end{theorem}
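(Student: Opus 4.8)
The strategy is a short concatenation of the facts already assembled above, so I expect the proof to be essentially formal. By Proposition \ref{prop:key} the conjecture $T^l_{\mathrm{nc}}(\bbA_0^{\mathrm{dg}})$ holds for the Lefschetz decomposition of $\perf(X)$ with $X=\cX^r_{d_1,d_2}$; hence Corollary \ref{cor:last} supplies, for a general linear subspace $L\subseteq V^\ast$, the equivalence $T^l(X_L)\Leftrightarrow T^l(Y_L)$ between the smooth linear sections of the two (resolved) determinantal varieties. Recall also from the Introduction that $T^l(Z)$ holds for every smooth projective $k$-scheme $Z$ with $\mathrm{dim}(Z)\leq 1$ --- the zero-dimensional and empty cases included.

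For item (i), I would invoke the dimension formula $\mathrm{dim}(X_L)=r(d_1+d_2-r)-1-\mathrm{dim}(L)$, which holds for general $L$ by Remark \ref{rk:singular}(i); the hypothesis $r(d_1+d_2-r)-1-\mathrm{dim}(L)\leq 1$ is then precisely the assertion $\mathrm{dim}(X_L)\leq 1$. Thus $T^l(X_L)$ holds, and the equivalence of Corollary \ref{cor:last} propagates it to $T^l(Y_L)$. Item (ii) is entirely symmetric: the formula $\mathrm{dim}(Y_L)=r(d_1-d_2-r)-1+\mathrm{dim}(L)$ turns the hypothesis $r(d_1-d_2-r)-1+\mathrm{dim}(L)\leq 1$ into $\mathrm{dim}(Y_L)\leq 1$, so $T^l(Y_L)$ holds, and Corollary \ref{cor:last} then yields $T^l(X_L)$.

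Since every input is already in place, there is no serious obstacle. The only point deserving attention is bookkeeping: one should check that a single general $L$ simultaneously makes $X_L$ and $Y_L$ smooth and realizes both dimension formulas, which is exactly what Remark \ref{rk:singular}(i) together with the footnote to Theorem \ref{thm:main} guarantee. It is also worth observing that (i) and (ii) are complementary low-dimensional regimes --- one bounding $\mathrm{dim}(X_L)$, the other $\mathrm{dim}(Y_L)$ --- so that neither item is subsumed by the other.
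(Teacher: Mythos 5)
Your proposal is correct and follows exactly the paper's argument: combine the dimension formulas $\mathrm{dim}(X_L)=r(d_1+d_2-r)-1-\mathrm{dim}(L)$ and $\mathrm{dim}(Y_L)=r(d_1-d_2-r)-1+\mathrm{dim}(L)$ with the validity of the Tate conjecture in dimensions $\leq 1$, and then transfer the conjecture across the equivalence $T^l(X_L)\Leftrightarrow T^l(Y_L)$ of Corollary \ref{cor:last} (which rests on Proposition \ref{prop:key}). No discrepancies to report.
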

To the best of the author's knowledge, Theorem \ref{thm:last} is new in the literature. It proves the Tate conjecture in several new cases. Here are two families of examples: 
\begin{example}[Segre varieties]\label{ex:Segre}
Let $r=1$. Thanks to Theorem \ref{thm:last}(ii), whenever $d_1 - d_2 -2 +\mathrm{dim}(L) \leq 1$, the conjecture $T^l(X_L)$ holds. In these cases, $X_L$ is a linear section of the Segre variety $\cZ^1_{d_1, d_2}$ and the dimension of $X_L$ is $2(d_2 - \mathrm{dim}(L))$ or $2(d_2 - \mathrm{dim}(L))+1$. Therefore, for example, by letting $d_2 \to \infty$ (and by keeping $\mathrm{dim}(L)$ fixed), we obtain infinitely many new examples of smooth projective $k$-schemes $X_L$, of arbitrary high dimension, satisfying the Tate conjecture.
\end{example}
\begin{example}
Let $r=1$, $d_1=4$, and $d_2=2$. In this case, the Segre variety $\cZ^1_{4, 2} \subset \bbP^7$ agrees with the rational normal $4$-fold scroll $S_{1,1,1,1}$; see \cite[Ex.~8.27]{Harris}. Choose a general linear subspace $L \subset V^\ast$ of dimension $1$ such that the associated hyperplane $\bbP(L^\perp)\subset \bbP^7$ does not contains any $3$-plane of the rulling of $S_{1,1,1,1}$. By combining Example \ref{ex:Segre} with \cite[Prop.~2.5]{Faenzi}, we hence conclude that the rational normal $3$-fold scroll $X_L=S_{1,1,2}$  satisfies the Tate conjecture.
\end{example}
\begin{example}[Square matrices]\label{ex:square}
Let $d_1=d_2=d$. Thanks to Theorem \ref{thm:last}(ii), whenever $-r^2-1 + \mathrm{dim}(L)\leq 1$, the conjecture $T^l(X_L)$ holds. In these cases the dimension of $X_L$ is $2(dr - \mathrm{dim}(L))$ or $2(dr - \mathrm{dim}(L))+1$. Therefore, for example, by letting $d\to \infty$ (and by keeping $r$ and $\mathrm{dim}(L)$ fixed), we obtain infinitely many new examples of smooth projective $k$-schemes $X_L$, of arbitrary high dimension, satisfying the Tate conjecture.
\end{example}
\begin{example}
Let $d_1=d_2=3$ and $r=2$. In this case, the determinantal variety $\cZ^2_{3,3}\subset \bbP^8$ has dimension $7$ and its singular locus is the $4$-dimensional Segre variety $\cZ^1_{3,3} \subset \cZ^2_{3,3}$. Given a general linear subspace $L\subset V^\ast$ of dimension $5$, the associated smooth linear section $X_L$ is $2$-dimensional and, thanks to Example \ref{ex:square}, it satisfies the Tate conjecture. Note that since $\mathrm{codim}(L^\perp)=5>4 = \mathrm{dim}(\cZ^1_{3,3})$, the subspace $\bbP(L^\perp)\subset \bbP^8$ does not intersects the singular locus $\cZ^1_{3,3}$ of $\cZ^2_{3,3}$. Therefore, in all these cases, the surface $X_L$ is a linear section of the determinantal variety $\cZ^2_{3,3}$.
\end{example}
\subsection*{Veronese-Clifford duality}
Let $W$ be a $k$-vector space of dimension $d$ and $X$ the associated projective space $\bbP(W)$ equipped with the double Veronese embedding $\bbP(W) \to \bbP(S^2W), [w] \mapsto [w\otimes w]$. Consider the Beilinson's full exceptional collection $\perf(X)=\langle \cO_X(-1), \cO_X, \cO_X(1), \ldots, \cO_X(d-2)\rangle$ (see \cite{Beilinson}) and set $i:=\lceil d/2 \rceil$ and 
\begin{eqnarray*}
\bbA_0=\bbA_1=\cdots = \bbA_{i-2}:=\langle \cO_X(-1), \cO_X\rangle & \bbA_{i-1}:=\begin{cases}
\langle \cO_X(-1), \cO_X\rangle & \text{if}\,\,d=2i\\
\langle \cO_X(-1)\rangle & \text{if}\,\,d=2i-1
\end{cases} &
\end{eqnarray*}
Under these notations, the category $\perf(X)$ admits the Lefschetz decomposition $\langle \bbA_0, \bbA_1(1), \ldots, \bbA_{i-1}(i-1)\rangle$ with respect to the line bundle $\cL_X(1)=\cO_X(2)$. Note that this implies that the conjecture $T^l(\bbA_0^{\mathrm{dg}})$ holds; see Remark \ref{rk:singular}(ii). 

Let $\cH:=X\times_{\bbP(S^2W)} \cQ \subset X \times \bbP(S^2 W^\ast)$ be the universal hyperplane section, with $\cQ \subset \bbP(S^2 W) \times \bbP(S^2W^\ast)$ the incidence quadric. By construction, the projection $q\colon \cH \to \bbP(S^2 W^\ast)$ is a flat quadric fibration. As proved in \cite[Thm.~5.4]{Kuznetsov-quadrics} (see also \cite[Thm.~2.3.6]{Bernardara}) the HP-dual $Y$ of $X$ is given by $\perf_\dg(\bbP(S^2 W^\ast); \mathcal{C}l_0(q))$ (see Remark \ref{rk:singular}(iii)), where $\mathcal{C}l_0(q)$ stands for the sheaf of even Clifford algebras associated to $q$. 

Let $L \subset S^2W^\ast$ be a general linear subspace. On the one hand, $X_L$ corresponds to the smooth complete intersection of the $\mathrm{dim}(L)$ quadric hypersurfaces in $\bbP(W)$ parametrized by $L$. On the other hand, $Y_L$ is given by $\perf_\dg(\bbP(L); \mathcal{C}l_0(q)_{|L})$.
\begin{corollary}\label{cor:main}
We have the equivalence $T^l(X_L) \Leftrightarrow T^l_{\mathrm{nc}}(\perf_\dg(\bbP(L); \mathcal{C}l_0(q)_{|L}))$.
\end{corollary}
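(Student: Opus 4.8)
The plan is to obtain Corollary~\ref{cor:main} as a direct instance of the HPD-invariance Theorem~\ref{thm:main}, in the noncommutative form provided by Remark~\ref{rk:singular}(iii): the two sides of the asserted equivalence are precisely $T^l(X_L)$ and $T^l_{\mathrm{nc}}(\perf_\dg(Y_L))$ for the Veronese--Clifford HP-dual pair $(X,Y)$, so once the hypotheses of that theorem are verified the statement follows formally.

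First I would collect the data already assembled in the discussion above: $X=\bbP(W)$ with $\cL_X(1)=\cO_X(2)$ carries the Lefschetz decomposition $\langle \bbA_0,\bbA_1(1),\ldots,\bbA_{i-1}(i-1)\rangle$ with $\bbA_0=\langle\cO_X(-1),\cO_X\rangle$; by \cite[Thm.~5.4]{Kuznetsov-quadrics} its HP-dual is $Y=\perf_\dg(\bbP(S^2W^\ast);\mathcal{C}l_0(q))$; and, since $\bbA_0$ admits a full exceptional collection, $T^l_{\mathrm{nc}}(\bbA_0^{\mathrm{dg}})$ holds by Remark~\ref{rk:singular}(ii). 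It then remains to check, for a general $L\subset S^2W^\ast$, the smoothness of $X_L$ and $Y_L$ together with the expected-dimension equalities $\mathrm{dim}(X_L)=\mathrm{dim}(X)-\mathrm{dim}(L)$ and $\mathrm{dim}(Y_L)=\mathrm{dim}(Y)-\mathrm{dim}(L^\perp)$, all of which are guaranteed by genericity of $L$ via Remark~\ref{rk:singular}(i). Concretely, $X_L$ is the smooth complete intersection of the $\mathrm{dim}(L)$ quadrics in $\bbP(W)$ cut out by $L$, of dimension $\mathrm{dim}(\bbP(W))-\mathrm{dim}(L)$, while the restriction of $q$ over $\bbP(L)$ remains a flat quadric fibration, so that $Y_L=\perf_\dg(\bbP(L);\mathcal{C}l_0(q)_{|L})$ is again a smooth proper dg category whose base $\bbP(L)$ has dimension $\mathrm{dim}(L)-1=\mathrm{dim}(\bbP(S^2W^\ast))-\mathrm{dim}(L^\perp)$. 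With these checks in hand, Theorem~\ref{thm:main} (via Remark~\ref{rk:singular}(iii)) gives $T^l(X_L)\Leftrightarrow T^l_{\mathrm{nc}}(\perf_\dg(Y_L))$, and substituting the identification $\perf_\dg(Y_L)=\perf_\dg(\bbP(L);\mathcal{C}l_0(q)_{|L})$ recalled just before the statement completes the argument.

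I do not expect a real obstacle here: the only mildly delicate points are (a) confirming that for general $L$ the restricted fibration stays flat, so that the even Clifford algebra $\mathcal{C}l_0(q)_{|L}$ defines a smooth proper dg category to which $T^l_{\mathrm{nc}}$ applies, and (b) keeping the HPD dimension conventions straight (the relevant ``$\mathrm{dim}(Y)$'' being that of the base $\bbP(S^2W^\ast)$, so that $\mathrm{dim}(Y)-\mathrm{dim}(L^\perp)=\mathrm{dim}(\bbP(L))$). Both follow from genericity of $L$ and from Kuznetsov's quadric-fibration HPD, so nothing beyond Theorem~\ref{thm:main} is needed.
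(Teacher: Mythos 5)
Your proposal is correct and follows the same route as the paper: the paper gives no separate proof of Corollary~\ref{cor:main}, treating it as an immediate application of Theorem~\ref{thm:main} in the singular/noncommutative form of Remark~\ref{rk:singular}(iii), with the hypotheses supplied exactly as you describe (the Lefschetz decomposition with $\bbA_0=\langle\cO_X(-1),\cO_X\rangle$ giving $T^l_{\mathrm{nc}}(\bbA_0^{\mathrm{dg}})$ via Remark~\ref{rk:singular}(ii), genericity of $L$ via Remark~\ref{rk:singular}(i), and Kuznetsov's identification $Y_L=\perf_\dg(\bbP(L);\mathcal{C}l_0(q)_{|L})$).
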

Recall that the space of quadrics $\bbP(S^2 W^\ast)$ comes equipped with a filtration $\Delta_d \subset \cdots \subset \Delta_2 \subset \Delta_1 \subset \bbP(S^2W^\ast)$, where $\Delta_i$ stands for the closed subscheme of those singular quadrics of corank $\geq i$. 
\begin{theorem}[Intersection of two quadrics]\label{thm:two}
Let $X_L$ be as in Corollary \ref{cor:main}. Assume that $\mathrm{dim}(L)=2$, that $\bbP(L) \cap \Delta_2 =\emptyset$, and that $p\neq 2$ when $d$ is odd. Under these assumptions, the conjecture~$T^l(X_L)$~holds.
\end{theorem}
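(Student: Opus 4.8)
\medskip\noindent\textit{Proof proposal.} The plan is to feed the statement into the equivalence of Corollary~\ref{cor:main} and then to prove the resulting noncommutative Tate conjecture by reducing the even Clifford algebra over $\bbP^1$ --- via the theory of pencils of quadrics --- to derived categories of curves and to ``point'' factors, for which the Tate conjecture is known.

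By Corollary~\ref{cor:main}, $T^l(X_L)$ is equivalent to $T^l_{\mathrm{nc}}(\perf_\dg(\bbP(L);\mathcal{C}l_0(q)_{|L}))$; since $\dim(L)=2$, the base $\bbP(L)$ is the projective line $\bbP^1_k$, so it remains to prove $T^l_{\mathrm{nc}}(\perf_\dg(\bbP^1_k;\cB))$ with $\cB:=\mathcal{C}l_0(q)_{|L}$ the even Clifford algebra of the pencil of $d$-dimensional quadratic forms parametrized by $L$. The hypothesis $\bbP(L)\cap\Delta_2=\emptyset$ says that every singular member of the pencil has corank exactly $1$ --- equivalently, the discriminant divisor $D\subset\bbP^1$ (of degree $d$) is reduced --- which is precisely the ``simple degeneration'' regime in which $\cB$ is well behaved; and the requirement $p\neq 2$ for $d$ odd ensures that smooth quadrics in $\bbP^{d-1}$, which has an odd number $d$ of homogeneous coordinates, exist at all and that the discriminant/Clifford constructions make sense.

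Granting this, I would invoke the analysis of derived categories of quadric fibrations and of complete intersections of two quadrics (Kuznetsov~\cite{Kuznetsov-quadrics}, see also~\cite{Bernardara}) and of the even Clifford algebra of a pencil of quadrics (Auel, Bernardara and Bolognesi), together with the fact that the base $\bbP^1$ is one-dimensional, to produce a semiorthogonal decomposition
$$
\perf_\dg(\bbP^1_k;\cB)\;=\;\big\langle\,\perf_\dg(C;\beta),\;\perf_\dg(B_1),\,\ldots,\,\perf_\dg(B_r)\,\big\rangle,
$$
where $C$ is a smooth projective $k$-curve --- for $d=2m$ the genus-$(m-1)$ hyperelliptic double cover of $\bbP^1$ branched along $D$ (with $r=0$), and for $d=2m+1$ the curve built from the discriminant double cover, whence the need for $p\neq 2$ --- the class $\beta$ lies in $\Br(C)$, and each $B_j$ is a finite-dimensional semisimple $k$-algebra (for small $d$ the curve factor may be absent, so that $\perf_\dg(\bbP^1_k;\cB)$ is generated by finitely many exceptional objects). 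Since $C$ is a smooth projective curve over the finite field $k$, its Brauer group vanishes: by the Hochschild-Serre spectral sequence for $C_{\bar k}\to C$, the group $H^2(C,\bbG_m)$ is assembled from $\Br(k)=0$, from $\Br(C_{\bar k})=0$ (Tsen), and from $H^1(k,\Pic(C_{\bar k}))$, which vanishes by Lang's theorem; hence $\beta=0$ and $\perf_\dg(C;\beta)\simeq\perf_\dg(C)$.

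To conclude, recall that algebraic $K$-theory is additive along semiorthogonal decompositions and commutes with the base change $-\otimes_k k_n$; applying $L_{KU}$, then $\pi_{-1}$, then $\Hom(\bbZ(l^\infty),-)$, all of which preserve finite direct sums, gives
$$
H^l_n\big(\perf_\dg(\bbP^1_k;\cB)\big)\;\cong\;H^l_n\big(\perf_\dg(C)\big)\oplus\bigoplus_{j=1}^{r}H^l_n\big(\perf_\dg(B_j)\big)\qquad(n\geq 1).
$$
By Theorem~\ref{thm:Thomason}, $T^l_{\mathrm{nc}}(\perf_\dg(C))\Leftrightarrow T^l(C)$, which holds since $\dim(C)\leq 1$; and, after Morita reduction of each $B_j$ to a product of finite extensions of $k$, $T^l_{\mathrm{nc}}(\perf_\dg(B_j))$ reduces by Theorem~\ref{thm:Thomason} to the Tate conjecture in dimension $0$. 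Hence every summand above vanishes, so $T^l_{\mathrm{nc}}(\perf_\dg(\bbP^1_k;\cB))$ holds, and therefore so does $T^l(X_L)$. The main obstacle is the semiorthogonal decomposition above: identifying $\perf_\dg(\bbP^1_k;\cB)$, up to semisimple point-factors, with a possibly Brauer-twisted derived category of a curve. The delicate case is $d$ odd, where $D$ has odd degree, $\cB$ is only generically Azumaya on $\bbP^1$, and making the passage to a curve precise --- and checking that the relevant results of Kuznetsov and of Auel, Bernardara and Bolognesi, usually stated over $\bbC$, survive over $k=\bbF_q$ --- is exactly where the two standing hypotheses are consumed.
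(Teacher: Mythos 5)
Your even-$d$ argument is essentially the paper's proof: pass from $(\bbP(L),\mathcal{C}l_0(q)_{|L})$ to the discriminant double cover $\widetilde{\bbP}(L)$ (your hyperelliptic $C$), where the Clifford algebra becomes Azumaya of rank $2^{(d/2)-1}$ thanks to $\bbP(L)\cap\Delta_2=\emptyset$, kill the Brauer class because $\widetilde{\bbP}(L)$ is a smooth curve over a finite field, and conclude by the Tate conjecture in dimension $\leq 1$; your Hochschild--Serre/Tsen/Lang justification of Brauer vanishing is just an expanded form of the paper's citation of Milne, and the additivity bookkeeping is handled in the paper by Proposition \ref{prop:additive} and Lemma \ref{lem:aux}.

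The odd-$d$ case, however, is a genuine gap, and it is exactly the step you flag as ``the main obstacle.'' Your proposed decomposition $\perf_\dg(\bbP^1_k;\cB)=\langle\perf_\dg(C;\beta),\perf_\dg(B_1),\ldots,\perf_\dg(B_r)\rangle$ is asserted, not proved, and as stated the curve $C$ is problematic: for $d$ odd the discriminant divisor $D\subset\bbP(L)$ has odd degree $d$, so there is no smooth double cover of $\bbP^1$ branched exactly along $D$ (the branch class of a double cover must be $2$-divisible in the Picard group), and $\mathcal{C}l_0(q)_{|L}$ does not become Azumaya over any honest curve produced this way --- it is only generically Azumaya on $\bbP^1$, as you note. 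The paper's substitute is precise: following \cite[\S3.6]{Kuznetsov-quadrics} one forms the discriminant (square-root) stack $\widehat{\bbP}(L)$, which is a smooth Deligne--Mumford stack because $p\neq 2$, over which $\mathcal{C}l_0(q)_{|L}$ becomes Azumaya (using $\bbP(L)\cap\Delta_2=\emptyset$), and then one invokes Ishii--Ueda \cite[Thm.~1.6]{Ueda} to get $\perf(\widehat{\bbP}(L))=\langle\perf(D),\perf(\bbP(L))\rangle$; this supplies exactly the ``curve ($=\bbP(L)$) plus zero-dimensional semisimple factors ($=\perf(D)$)'' shape you posited, and only then does the Brauer-triviality-over-a-finite-field argument and the Tate conjecture in dimensions $0$ and $1$ finish the proof. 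Without this stacky input (or an equally concrete replacement valid over $\bbF_q$), your odd-$d$ case does not go through, since deferring it to results ``usually stated over $\bbC$'' is precisely the point at issue.
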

The proof of Theorem \ref{thm:two} is based on the solution of the noncommutative Tate conjecture in Corollary \ref{cor:main}; see \S\ref{sec:proof}. An alternative (geometric) proof, based on the notion of variety of maximal planes, was obtained by Reid\footnote{Reid also assumed in {\em loc. cit.} that $\bbP(L) \cap \Delta_2 =\emptyset$; see \cite[Def.~1.9]{Reid}.} in the early seventies; see \cite[Thms.~3.14 and 4.14]{Reid}. Therein, Reid proved the Hodge conjecture but, as Kahn kindly informed me, a similar proof works for the Tate conjecture.
\begin{remark}[Intersection of even-dimensional quadrics]
In the case of an intersection $X_L$ of (several) even-dimensional quadrics, we prove in Theorem \ref{prop:2} below that the Tate conjecture $T^l(X_L)$, with $l\neq 2$, is equivalent to the corresponding Tate conjecture for the discriminant $2$-fold cover $\widetilde{\bbP}(L)$ of the projective space $\bbP(L)$. To the best of the author's knowledge, this (geometric) result is new in the literature.
\end{remark}
\subsection*{Tate conjecture for stacks}
Theorem \ref{thm:Thomason} allows us to easily extend Tate's original conjecture from smooth projective $k$-schemes $X$ to smooth proper algebraic $k$-stacks $\cX$ by setting $T^l(\cX):=T^l_{\mathrm{nc}}(\perf_\dg(\cX))$. The following result proves this extended conjecture for certain global orbifolds:
\begin{theorem}\label{thm:orbifold}
Let $G$ be a finite group of order $m$, $X$ a smooth projective $k$-scheme equipped with a $G$-action, and $\cX:=[X/G]$ the associated global orbifold. If $p \nmid m$, then we have the following implication of conjectures
\begin{eqnarray}\label{eq:implication-main}
\sum_{\sigma \subseteq G} T^l(X^\sigma \times \mathrm{Spec}(k[\sigma]))\Rightarrow T^l(\cX) && \forall\,\, l\nmid m\,,
\end{eqnarray}
where $\sigma$ is a cyclic subgroup of $G$. Moreover, whenever $\mathrm{dim}(X)\leq 3$ or $m\mid (q-1)$, the conjecture $T^l(X^\sigma\times \mathrm{Spec}(k[\sigma]))$ can be replaced by the conjecture $T^l(X^\sigma)$.
\end{theorem}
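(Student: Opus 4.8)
The plan is to deduce the theorem from the definition $T^l(\cX):=T^l_{\mathrm{nc}}(\perf_\dg(\cX))$, from Thomason's Theorem \ref{thm:Thomason}, and from an orbifold decomposition of the noncommutative motive of $\cX=[X/G]$. I would begin by isolating two formal properties of the invariant $\cA\mapsto\{H^l_n(\cA)\}_{n\geq 1}$. Since algebraic $K$-theory, and hence $L_{KU}K$ and hence $H^l_n$, is an additive invariant (it sends semiorthogonal decompositions to direct sums), the condition $T^l_{\mathrm{nc}}(\cA)$ is inherited by every dg category whose noncommutative motive is a direct summand of one satisfying $T^l_{\mathrm{nc}}$. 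Moreover, because the Pr\"ufer group $\bbZ(l^\infty)$ is $l$-primary while $l\nmid m$, each group $H^l_n(\cA)$ is a $\bbZ[1/m]$-module; consequently $T^l_{\mathrm{nc}}$ descends to the category $\NChow(k)_{1/m}$ of noncommutative Chow motives with $\bbZ[1/m]$-coefficients, where the preceding summand-stability still applies.

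Next I would invoke the orbifold decomposition. Because $p\nmid m=|G|$, for each cyclic subgroup $\sigma\subseteq G$ the group algebra $k[\sigma]$ is a finite \'etale $k$-algebra, the fixed locus $X^\sigma$ is a smooth projective $k$-scheme, and $X^\sigma\times\Spec(k[\sigma])$ is a smooth projective $k$-scheme with $\perf_\dg(X^\sigma\times\Spec(k[\sigma]))\simeq\perf_\dg([X^\sigma/\sigma])$ (the action of $\sigma$ on $X^\sigma$ being trivial). The decomposition realizes the noncommutative motive of $\cX$, after inverting $m$, as a direct summand in $\NChow(k)_{1/m}$ of $\bigoplus_\sigma\perf_\dg(X^\sigma\times\Spec(k[\sigma]))$, the sum running over the cyclic subgroups $\sigma\subseteq G$; the passage from the usual element-indexed decomposition to one indexed by cyclic subgroups is exactly where the \'etale algebras $k[\sigma]$, and the $N_G(\sigma)$-invariants that are available because $|N_G(\sigma)|$ divides $m$, intervene. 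Combining this with the two properties above and with Theorem \ref{thm:Thomason} applied to each smooth projective $k$-scheme $X^\sigma\times\Spec(k[\sigma])$: whenever $T^l(X^\sigma\times\Spec(k[\sigma]))$ holds for every cyclic $\sigma\subseteq G$, the group $H^l_n(\perf_\dg(\cX))$ is a direct summand of $\bigoplus_\sigma H^l_n(\perf_\dg(X^\sigma\times\Spec(k[\sigma])))=0$, hence $T^l_{\mathrm{nc}}(\perf_\dg(\cX))=T^l(\cX)$ holds. This is the implication \eqref{eq:implication-main}.

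For the addendum I would argue in two cases. If $m\mid(q-1)$, then $k=\bbF_q$ contains all $m$-th roots of unity, so $k[\sigma]\cong k^{\times|\sigma|}$ and $X^\sigma\times\Spec(k[\sigma])\cong\bigsqcup^{|\sigma|}X^\sigma$; as $T^l$ of a disjoint union is the conjunction of the conjectures for its components, $T^l(X^\sigma\times\Spec(k[\sigma]))\Leftrightarrow T^l(X^\sigma)$. If instead $\dim(X)\leq 3$, then $\dim(X^\sigma)\leq 3$, so by Poincar\'e duality the only nontrivial content of $T^l(X^\sigma)$ concerns divisors; since the connected components of $X^\sigma\times\Spec(k[\sigma])$ are finite base-changes of $X^\sigma$ and the Tate conjecture for divisors is invariant under finite extensions of the base finite field, $T^l(X^\sigma)\Rightarrow T^l(X^\sigma\times\Spec(k[\sigma]))$. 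In either case the hypothesis of \eqref{eq:implication-main} may be weakened to $\sum_\sigma T^l(X^\sigma)$. The main obstacle is the orbifold decomposition itself: producing the summand inclusion with $\bbZ[1/m]$-coefficients and at the level of noncommutative Chow motives --- so that it is detected by algebraic $K$-theory --- is precisely what forces $p\nmid m$ (it is what guarantees smoothness and properness of $\perf_\dg([X^\sigma/\sigma])$ and the availability of the averaging idempotents splitting off the fixed-point contributions); everything else is formal, apart from the classical input on divisors used when $\dim(X)\leq 3$.
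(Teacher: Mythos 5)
Your argument follows the paper's proof in its essentials: the main implication is obtained exactly as in the text, by feeding the Tabuada--Van den Bergh orbifold decomposition (the paper cites \cite[Thm.~1.1 and Rk.~1.4]{Orbifold}) into a $\bbZ[1/m]$-linear additive invariant built from $\pi_{-1}L_{KU}K(-\otimes_k k_n)$, and then removing the inverted $m$ using that $\Hom(\bbZ(l^\infty),-)$ is insensitive to inverting $m$ when $l\nmid m$ --- your ``descends to $\NChow(k)_{1/m}$'' step should be phrased carefully (the groups $\pi_{-1}L_{KU}K(\cA\otimes_k k_n)$ themselves are not $\bbZ[1/m]$-modules; one first localizes the invariant and then observes that the $l$-adic Tate module is unchanged), but this is precisely the paper's Lemma \ref{lem:aux}, and your stated reason is the right one. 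Where you genuinely diverge is in the addendum. For $m\mid(q-1)$ the paper invokes the refined decomposition \cite[Cor.~1.6(i)]{Orbifold}, whereas you simply split $k[\sigma]\cong k^{\times|\sigma|}$ (legitimate, since $|\sigma|\mid m\mid q-1$ makes $x^{|\sigma|}-1$ split with distinct roots) and identify $X^\sigma\times\Spec(k[\sigma])$ with a disjoint union of copies of $X^\sigma$; this is more elementary and reduces the second case to the first claim directly. For $\dim(X)\leq 3$ you reduce to divisors as in the paper (Tate's result quoted in Remark \ref{rk:last}) and then invoke invariance of the divisorial Tate conjecture under finite extension of the finite base field; note that, by Shapiro's lemma, this is literally the same statement as the instance of the product formula \cite[Thm.~5.2]{Tate-motives} that the paper applies to $X^\sigma\times\Spec(k[\sigma])$ with zero-dimensional second factor (the nontrivial direction is ``going up,'' which is not formal and is exactly what that classical input supplies), so you and the paper are leaning on the same theorem in different clothing. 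In short: same skeleton, a cleaner treatment of the $m\mid(q-1)$ case, and an equivalent reformulation of the divisor-level input in the low-dimensional case.
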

Note that if $m\mid (q-1)$, then $p\nmid m$. Note also that since the Tate conjecture holds in dimensions $\leq 1$, Theorem \ref{thm:orbifold} leads automatically to the following result:
\begin{corollary}\label{cor:examples} Assume that $p\nmid m$.
\begin{itemize}
\item[(i)] If $X=\bullet := \mathrm{Spec}(k)$, then the conjecture $T^l([\bullet/G])$ holds for every $l\nmid m$.
\item[(ii)] If $X$ is a curve $C$, then the conjecture $T^l([C/G])$ holds for every $l \nmid m$.
\item[(iii)] If $X$ is a surface $S$ such that the Tate conjecture $T^l(S)$, with $l\nmid m$, holds, then the conjecture $T^l([S/G])$, with $l\nmid m$, also holds.
\end{itemize}
\end{corollary}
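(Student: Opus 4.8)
The plan is to obtain all three items as immediate instances of Theorem~\ref{thm:orbifold}, the only additional inputs being that the Tate conjecture is known in dimension~$\leq 1$ (recalled in the introduction) and that $\mathrm{dim}(X)\leq 3$ in each of the three cases, so that the ``Moreover'' clause of Theorem~\ref{thm:orbifold} is available and the relevant implication becomes $\sum_{\sigma\subseteq G}T^l(X^\sigma)\Rightarrow T^l(\cX)$, where $\sigma$ ranges over the cyclic subgroups of $G$. Since $p\nmid m$ holds by hypothesis in all three items, Theorem~\ref{thm:orbifold} applies; and $l\nmid m$ is part of each conclusion, so there is nothing to arrange on that side. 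One also uses throughout that, for $\sigma\subseteq G$, the fixed locus $X^\sigma$ is again a smooth projective $k$-scheme (the acting group having order prime to $p$), hence a finite disjoint union of smooth projective integral $k$-schemes, and that $T^l$ of such a disjoint union is the conjunction of the conjectures $T^l$ for the individual pieces.

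For item~(i) I would take $X=\bullet$; then $\mathrm{dim}(X)=0$ and $X^\sigma=\bullet$ for every cyclic $\sigma$, so Theorem~\ref{thm:orbifold} reduces $T^l([\bullet/G])$ to $\bigwedge_\sigma T^l(\bullet)$, which holds trivially. For item~(ii) I would take $X=C$ a smooth projective curve; then $\mathrm{dim}(X)=1$ and each fixed locus $C^\sigma$ is a smooth projective $k$-scheme of dimension $\leq 1$, so every $T^l(C^\sigma)$ holds and Theorem~\ref{thm:orbifold} yields $T^l([C/G])$.

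For item~(iii) I would take $X=S$ a smooth projective surface for which $T^l(S)$ holds; then $\mathrm{dim}(X)=2$, and Theorem~\ref{thm:orbifold} reduces $T^l([S/G])$ to $\bigwedge_\sigma T^l(S^\sigma)$. It therefore suffices to show that $T^l(S^\sigma)$ holds for every cyclic $\sigma\subseteq G$. I would decompose $S^\sigma$ into its connected (equivalently, by smoothness, irreducible) components. Any component of dimension $\leq 1$ satisfies $T^l$ because the Tate conjecture is known in that range. Any component $Z$ of dimension $2$ is a $2$-dimensional irreducible closed subscheme of the surface $S$, hence equals one of the connected components of $S$; consequently $T^l(Z)$ appears among the conjuncts making up the hypothesis $T^l(S)$ and thus holds. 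Taking the conjunction over all components of $S^\sigma$ gives $T^l(S^\sigma)$, and Theorem~\ref{thm:orbifold} then delivers $T^l([S/G])$.

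I do not anticipate a genuine obstacle, as the entire content is packaged in Theorem~\ref{thm:orbifold}. The one point worth flagging is that item~(iii) genuinely relies on the ``Moreover'' clause --- legitimate precisely because $\mathrm{dim}(S)\leq 3$ --- in order to pass from $T^l(S^\sigma\times\mathrm{Spec}(k[\sigma]))$ to the cleaner $T^l(S^\sigma)$; by contrast, items~(i) and~(ii) already go through with the bare implication~\eqref{eq:implication-main}, since there the schemes $\bullet\times\mathrm{Spec}(k[\sigma])$ and $C^\sigma\times\mathrm{Spec}(k[\sigma])$ have dimension $\leq 1$ and so satisfy $T^l$ unconditionally.
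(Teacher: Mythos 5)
Your proposal is correct and follows the paper's own route: the paper derives the corollary directly from Theorem \ref{thm:orbifold} together with the validity of the Tate conjecture in dimension $\leq 1$, exactly as you do (the paper leaves implicit the points you spell out, namely that $\mathrm{Spec}(k[\sigma])$ is $0$-dimensional, that $\mathrm{dim}(X)\leq 3$ makes the ``Moreover'' clause available, and that the $2$-dimensional components of $S^\sigma$ are components of $S$, so $T^l(S^\sigma)$ follows from $T^l(S)$). No gaps.
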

\begin{example}
Let $S$ be an abelian variety equipped with the $\bbZ/2$-action $a \mapsto -a$. Thanks to Corollary \ref{cor:examples}(iii), the conjecture $T^l([S/(\bbZ/2)])$ holds for every $l\neq 2$.
\end{example}
%
We finish this section with the following ``twisted'' version of Corollary \ref{cor:examples}(iii):
\begin{theorem}\label{prop:twisted}
Let $G$ be a finite group of order $m$, $S$ a smooth projective $k$-surface equipped with a $G$-action, and $\cG$ a $G$-equivariant sheaf of Azumaya algebras over $S$ of rank $r$. Assume that $m\mid (q-1)$, that the $G$-action on $S$ is faithful, and that the Tate conjecture $T^l(S)$, with $l\nmid mr$, holds. Under these assumptions, the conjecture $T^l_{\mathrm{nc}}(\perf_\dg([S/G];\cG))$, with $l\nmid mr$, also holds.
\end{theorem}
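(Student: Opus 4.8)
The plan is to mimic the proof of Theorem \ref{thm:orbifold} in the twisted setting: decompose the noncommutative motive of $\perf_\dg([S/G];\cG)$ as a direct summand of a finite sum indexed by the cyclic subgroups of $G$, and then dispatch each summand using the known cases of the Tate conjecture together with the computation of the noncommutative motive of a sheaf of Azumaya algebras. For the first step, note that since $m\mid(q-1)$ we have $p\nmid m$ (so the $G$-action on $S$ is tame) and $k=\bbF_q$ contains all $m$-th roots of unity. Under these hypotheses, and using faithfulness, the twisted equivariant decomposition of noncommutative motives expresses $U(\perf_\dg([S/G];\cG))$ as a direct summand of a finite direct sum $\bigoplus_{\sigma}U(\perf_\dg(S^\sigma;\cG_\sigma))$ indexed by the cyclic subgroups $\sigma\subseteq G$, where $\cG_\sigma$ is the Azumaya algebra over the fixed locus $S^\sigma$ obtained by restricting $\cG$ and untwisting the residual $\sigma$-action via the roots of unity of $k$, and where $\rk(\cG_\sigma)$ divides a power of $mr$; the analogous decomposition holds over each $k_n$, $n\geq 1$. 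Since $H^l_n(-)$ is the $l$-adic Tate module of $\pi_{-1}L_{KU}K(-)$, which is an additive invariant and hence factors through noncommutative motives, and since vanishing of such groups passes to direct summands and is compatible with finite direct sums, it suffices to prove $T^l_{\mathrm{nc}}(\perf_\dg(S^\sigma;\cG_\sigma))$ for every cyclic $\sigma\subseteq G$ and every $l\nmid mr$.

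Fix therefore $l\nmid mr$. As $p\nmid m$, each fixed locus $S^\sigma$ is smooth and projective (a tame fixed locus inside a smooth projective surface), and $\cG_\sigma$ is an Azumaya algebra over $S^\sigma$ with $l\nmid\rk(\cG_\sigma)$. By the computation of the noncommutative motive of a sheaf of Azumaya algebras (Tabuada--Van den Bergh), the noncommutative motives of $\perf_\dg(S^\sigma;\cG_\sigma)$ and of $\perf_\dg(S^\sigma)$ become isomorphic after inverting $\rk(\cG_\sigma)$; since $l$ does not divide this rank, passage to $l$-adic Tate modules is unaffected, whence $T^l_{\mathrm{nc}}(\perf_\dg(S^\sigma;\cG_\sigma))\Leftrightarrow T^l_{\mathrm{nc}}(\perf_\dg(S^\sigma))=T^l(S^\sigma)$ by Theorem \ref{thm:Thomason}. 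For $\sigma=\{1\}$ this is the conjecture $T^l(S)$, which holds by hypothesis. For $\sigma\neq\{1\}$, faithfulness forces $S^\sigma\subsetneq S$, hence $\dim S^\sigma\leq 1$, and $T^l(S^\sigma)$ holds because the Tate conjecture is known in dimensions $\leq 1$. Invoking the reduction of the first paragraph, we conclude that $T^l_{\mathrm{nc}}(\perf_\dg([S/G];\cG))$ holds for every $l\nmid mr$.

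The heart of the argument is the twisted equivariant decomposition used above, over the finite field $\bbF_q$: one must construct the induced Azumaya algebras $\cG_\sigma$ --- which is where faithfulness (making the fixed loci proper) and $m\mid(q-1)$ (making the action tame and allowing the residual $\sigma$-action on $\cG|_{S^\sigma}$ to be diagonalized via roots of unity of $k$) are used --- and verify that $\rk(\cG_\sigma)$ divides a power of $mr$, which is exactly what singles out the hypothesis ``$l\nmid mr$''. A more economical route, at the cost of extending the Azumaya-algebra computation of noncommutative motives to stacks, is to observe that $\cG$ descends to a sheaf of Azumaya algebras $\overline{\cG}$ of rank $r$ on the (tame, generically schematic) Deligne--Mumford stack $\cX:=[S/G]$, so that $T^l_{\mathrm{nc}}(\perf_\dg([S/G];\cG))\Leftrightarrow T^l_{\mathrm{nc}}(\perf_\dg(\cX))=T^l([S/G])$ for $l\nmid r$, the latter conjecture holding for all $l\nmid mr$ by Corollary \ref{cor:examples}(iii) together with the assumed $T^l(S)$.
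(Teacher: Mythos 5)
Your overall strategy is the same as the paper's (decompose over cyclic subgroups using the twisted orbifold results of \cite{Orbifold} with $mr$ inverted, pass through Lemma \ref{lem:aux}, use faithfulness to force the summands attached to non-trivial $\sigma$ into dimension $\leq 1$, and feed in $T^l(S)$ for the trivial subgroup), but the step you yourself call the heart of the argument is asserted in a form that is neither proved by you nor available in the literature. You claim that $\perf_\dg([S/G];\cG)$ decomposes (as a direct summand, after inverting $m$) into pieces of the form $\perf_\dg(S^\sigma;\cG_\sigma)$ with $\cG_\sigma$ a sheaf of Azumaya algebras \emph{over the fixed locus $S^\sigma$ itself}, obtained by ``untwisting the residual $\sigma$-action via roots of unity''. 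This untwisting does not produce an Azumaya algebra over $S^\sigma$: the residual $\sigma$-action on $\cG_{|S^\sigma}$ is only inner up to a central ambiguity, and the relevant crossed-product algebra is Azumaya over its \emph{center}, which is the structure sheaf of a $\sigma^\vee$-Galois cover of $S^\sigma$ rather than of $S^\sigma$. Accordingly, the result actually used in the paper, \cite[Cor.~1.29(ii)]{Orbifold}, exhibits $E_n(\perf_\dg([S/G];\cG))_{1/mr}$ as a direct summand of $\bigoplus_{\sigma\subseteq G}E_n(\perf_\dg(Y_\sigma))_{1/mr}$ for certain $\sigma^\vee$-Galois covers $Y_\sigma\to S^\sigma$, with the rank $r$ already inverted and no twist left on the summands. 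Your endgame survives this correction, since $\mathrm{dim}(Y_\sigma)=\mathrm{dim}(S^\sigma)\leq 1$ for $\sigma\neq\{1\}$ by faithfulness and $Y_{\{1\}}$ contributes $T^l(S)$; but as written your proof rests on an unproved decomposition whose stated shape is not correct, and it bypasses rather than supplies the construction of the covers $Y_\sigma$, which is exactly where the hypotheses $m\mid(q-1)$ and $l\nmid mr$ earn their keep.

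Your proposed ``more economical route'' should be discarded: descending $\cG$ to $[S/G]$ and invoking an Azumaya-untwisting equivalence $T^l_{\mathrm{nc}}(\perf_\dg([S/G];\cG))\Leftrightarrow T^l_{\mathrm{nc}}(\perf_\dg([S/G]))$ for $l\nmid r$ is precisely the statement that Remark \ref{rk:Azumaya} warns against: the equivalence of \cite[Thm.~2.1]{Azumaya} holds for smooth projective $k$-schemes but \emph{fails} for sheaves of Azumaya algebras over global orbifolds, because the Brauer-type twist interacts non-trivially with the stabilizer groups (this is exactly what the covers $Y_\sigma$ record). So that shortcut cannot be repaired by ``extending the computation to stacks''; the detour through \cite[Cor.~1.29(ii)]{Orbifold} is not optional.
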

\section{Variants of the noncommutative Tate conjecture}
Let $\cA$ be a smooth proper dg category $\cA$, $l\neq p$ a prime number, and $n \geq 1$ an integer. Given an integer $m \geq 1$, consider the following $\bbZ[1/m]$-module 
$$H^l_n(\cA; 1/m):=\Hom(\bbZ(l^\infty), (\pi_{-1} L_{KU}K(\cA\otimes_k k_n))_{1/m})$$ 
and the corresponding variant of the noncommutative Tate conjecture:

\vspace{0.2cm}

Conjecture $T^l_{\mathrm{nc}}(\cA; 1/m)$: The $\bbZ[1/m]$-modules $H^l_n(\cA; 1/m)$, $n \geq 1$, are trivial.

\vspace{0.2cm}

\begin{lemma}\label{lem:aux}
We have $T^l_{\mathrm{nc}}(\cA) \Leftrightarrow T^l_{\mathrm{nc}}(\cA;1/m)$ for every $l \nmid m$.
\end{lemma}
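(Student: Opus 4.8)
The plan is to reduce the asserted equivalence to a purely algebraic statement about each abelian group $M_n:=\pi_{-1} L_{KU}K(\cA\otimes_k k_n)$, $n\geq 1$, and then to observe that inverting an integer $m$ coprime to $l$ leaves the $l$-primary part of $M_n$ untouched, so that in fact $H^l_n(\cA)\cong H^l_n(\cA;1/m)$ for every $n$.

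First I would record the elementary fact that, for any abelian group $M$, the inclusion of the $l$-primary torsion subgroup $M[l^\infty]\hookrightarrow M$ induces an isomorphism $\Hom(\bbZ(l^\infty),M[l^\infty])\isoto \Hom(\bbZ(l^\infty),M)$: every element of $\bbZ(l^\infty)=\bbQ_l/\bbZ_l$ has $l$-power order, hence so does its image under any homomorphism $\bbZ(l^\infty)\to M$. (Equivalently, as already noted in the text, $\Hom(\bbZ(l^\infty),M)$ is the $l$-adic Tate module $\varprojlim_j M[l^j]$.) Thus $H^l_n(\cA)$ depends only on $M_n[l^\infty]$, while $H^l_n(\cA;1/m)$ depends only on $(M_n)_{1/m}[l^\infty]$, where $(M_n)_{1/m}=M_n\otimes_\bbZ\bbZ[1/m]$.

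Next, assuming $l\nmid m$, I would show that the localization map $\iota\colon M_n\to (M_n)_{1/m}$ restricts to an isomorphism $M_n[l^\infty]\isoto (M_n)_{1/m}[l^\infty]$ compatible with the multiplication-by-$l$ transition maps. For injectivity, note that $\ker(\iota)$ is the $m$-power torsion subgroup of $M_n$, which meets $M_n[l^\infty]$ trivially since $\gcd(m,l)=1$. For surjectivity, given $y\in(M_n)_{1/m}$ with $l^jy=0$, write $y=\iota(x)/m^k$ with $x\in M_n$; then $l^j\iota(x)=0$, so $m^a l^j x=0$ in $M_n$ for some $a\geq 0$, i.e. $m^a x\in M_n[l^j]$. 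Since $m$ acts invertibly on the $\bbZ/l^j$-module $(M_n)_{1/m}[l^j]$, choosing $u\in\bbZ$ with $u m^{a+k}\equiv 1\pmod{l^j}$ gives $y=\iota(u m^a x)$ with $u m^a x\in M_n[l^j]$. Passing to the limit over $j$ yields $H^l_n(\cA)\cong H^l_n(\cA;1/m)$ for all $n\geq 1$; in particular all the groups on the left vanish if and only if all the groups on the right vanish, which is precisely $T^l_{\mathrm{nc}}(\cA)\Leftrightarrow T^l_{\mathrm{nc}}(\cA;1/m)$.

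I do not expect a genuine obstacle here: the one point demanding a little care is the surjectivity step, namely keeping track of denominators so as to see that an $l$-torsion class appearing after inverting $m$ is already present beforehand. Alternatively, one could argue homologically: since $\bbZ[1/m]$ is flat over $\bbZ$, tensoring the exact sequence $0\to M_n[l^\infty]\to M_n\to M_n/M_n[l^\infty]\to 0$ with $\bbZ[1/m]$, and using that $M_n[l^\infty]\otimes_\bbZ\bbZ[1/m]=M_n[l^\infty]$ while $(M_n/M_n[l^\infty])\otimes_\bbZ\bbZ[1/m]$ remains $l$-torsion-free, identifies $(M_n)_{1/m}[l^\infty]$ with $M_n[l^\infty]$, and the conclusion follows as before.
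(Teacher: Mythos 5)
Your argument is correct and is essentially the paper's own proof: the paper likewise observes that, since $l \nmid m$, the localization maps $\pi_{-1}L_{KU}K(\cA\otimes_k k_n)\to(\pi_{-1}L_{KU}K(\cA\otimes_k k_n))_{1/m}$ induce isomorphisms on $l$-power torsion subgroups and then passes to $l$-adic Tate modules. You simply supply the elementary verification of that torsion isomorphism (plus the flatness variant), which the paper asserts without detail.
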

\begin{proof}
Since by assumption $l \nmid m$, the localization homomorphisms
\begin{eqnarray*}
\pi_{-1} L_{KU} K(\cA\otimes_k k_n) \too (\pi_{-1} L_{KU} K(\cA\otimes_k k_n))_{1/m} && n \geq 1
\end{eqnarray*}
induce an isomorphism between all the $l$-power torsion subgroups. Consequently, by passing to the $l$-adic Tate modules, we conclude that the induced homomorphisms $H^l_n(\cA) \to H^l_n(\cA; 1/m), n \geq 1$, are invertible.
\end{proof}
\section{Proof of Theorem \ref{thm:main}}
By definition of the Lefschetz decomposition $\langle \bbA_0, \bbA_1(1), \ldots, \bbA_{i-1}(i-1)\rangle$, we have a chain of admissible triangulated subcategories $\bbA_{i-1}\subseteq \cdots \subseteq \bbA_1\subseteq \bbA_0$ with $\bbA_r(r):=\bbA_r \otimes \cL_X(r)$. Note that $\bbA_r(r)\simeq \bbA_r$. Let $\mathfrak{a}_r$ be the right orthogonal complement to $\bbA_{r+1}$ in $\bbA_r$; these are called the {\em primitive subcategories} in \cite[\S4]{Kuznetsov-IHES}. By construction, we have the following semi-orthogonal decompositions:
\begin{eqnarray}\label{eq:decomp1}
\bbA_r = \langle \mathfrak{a}_r, \mathfrak{a}_{r+1}, \ldots, \mathfrak{a}_{i-1} \rangle && 0\leq r \leq i-1\,.
\end{eqnarray}
As proved in \cite[Thm.~6.3]{Kuznetsov-IHES} (see also \cite[Thm.~2.3.4]{Bernardara}), the category $\perf(Y)$ admits a HP-dual Lefschetz decomposition $\langle \bbB_{j-1}(1-j), \bbB_{j-2}(2-j), \ldots, \bbB_0\rangle$ with respect to $\cL_Y(1)$; as above, we have a chain of subcategories $\bbB_{j-1} \subseteq \bbB_{j-2} \subseteq \cdots \subseteq \bbB_0$. Moreover, the primitive subcategories coincide (via a Fourier-Mukai type functor) with those of $\perf(X)$ and we have the following semi-orthogonal decompositions:
\begin{eqnarray}\label{eq:decomp2}
\bbB_r=\langle \mathfrak{a}_0, \mathfrak{a}_1, \ldots, \mathfrak{a}_{\mathrm{dim}(V)-r-2}\rangle && 0 \leq r \leq j-1\,.
\end{eqnarray}
Furthermore, the assumptions $\mathrm{dim}(X_L)=\mathrm{dim}(X)-\mathrm{dim}(L)$ and $\mathrm{dim}(Y_L)=\mathrm{dim}(Y) - \mathrm{dim}(L^\perp)$ imply the existence of semi-orthogonal decompositions
\begin{equation}\label{eq:semi-1}
\perf(X_L)=\langle \bbC_L, \bbA_{\mathrm{dim}(V)}(1), \ldots, \bbA_{i-1}(i-\mathrm{dim}(V))\rangle 
\end{equation}
\begin{equation}\label{eq:semi-2}
\perf(Y_L)=\langle \bbB_{j-1}(\mathrm{dim}(L^\perp)-j), \ldots, \bbB_{\mathrm{dim}(L^\perp)}(-1), \bbC_L \rangle\,,
\end{equation}
where $\bbC_L$ is a common (triangulated) category. Let us denote, respectively, by $\bbC_L^\dg$, $\bbA_r^\dg$ and $\mathfrak{a}_r^\dg$ the dg enhancement of $\bbC_L$, $\bbA_r$ and $\mathfrak{a}_r$ induced from $\perf_\dg(X_L)$. Similarly, let us denote by $\bbC_L^{\dg'}$ and $\bbB_r^{\dg}$ the dg enhancement of $\bbC_L$ and $\bbB_r$ induced from $\perf_\dg(Y_L)$. Since by assumption the $k$-schemes $X_l$ and $Y_L$ are smooth (and projective), all these dg categories are smooth (and proper). 

Now, consider the following functors ($n \geq 1$):
\begin{eqnarray}\label{eq:additive}
E_n\colon \dgcat(k) \too \mathrm{Ab} && \cA \mapsto \pi_{-1} L_{KU} K(\cA\otimes_k k_n)\,,
\end{eqnarray}
defined on the category of (small) dg categories and with values in the category of abelian groups. Thanks to Proposition \ref{prop:additive} below, the functors \eqref{eq:additive} are additive invariants of dg categories. As explained in \cite[Prop.~2.2]{book}, this implies, in particular, that the above semi-orthogonal decompositions \eqref{eq:semi-1}-\eqref{eq:semi-2} give rise to the following direct sums decompositions of abelian groups ($n \geq1$):
\begin{equation}\label{eq:group1} 
E_n(\perf_\dg(X_L)) \simeq E_n(\bbC^\dg_L) \oplus E_n(\bbA^\dg_{\mathrm{dim}(V)}) \oplus \cdots \oplus E_n(\bbA^\dg_{i-1})\end{equation}
\begin{equation}\label{eq:group2}
E_n(\perf_\dg(Y_L)) \simeq E_n(\bbB_{j-1}^\dg) \oplus \cdots \oplus E_n(\bbB^\dg_{\mathrm{dim}(L^\perp)}) \oplus E_n(\bbC_L^{\dg'})\,.
\end{equation}
Consequently, by applying the functor $\mathrm{Hom}(\bbZ(l^\infty), -)$ to the isomorphisms \eqref{eq:group1}-\eqref{eq:group2}, we obtain the following equivalences of conjectures:
\begin{equation}\label{eq:conjecture1}
T^l_{\mathrm{nc}}(\perf_\dg(X_L))\Leftrightarrow T^l_{\mathrm{nc}}(\bbC_L^\dg)+ T^l_{\mathrm{nc}}(\bbA^\dg_{\mathrm{dim}(V)}) + \cdots + T^l_{\mathrm{nc}}(\bbA^\dg_{i-1})
\end{equation}
\begin{equation}\label{eq:conjecture2}
T^l_{\mathrm{nc}}(\perf_\dg(Y_L)\Leftrightarrow T^l_{\mathrm{nc}}(\bbB^\dg_{j-1}) + \cdots + T^l_{\mathrm{nc}}(\bbB^\dg_{\mathrm{dim}(L^\perp)}) + T^l_{\mathrm{nc}}(\bbC^{\dg'}_L)\,.
\end{equation}
On the one hand, since by assumption the conjecture $T^l_{\mathrm{nc}}(\bbA_0^\dg)$ holds, we conclude from the above semi-orthogonal decompositions \eqref{eq:decomp1}-\eqref{eq:decomp2} that the conjectures $T^l_{\mathrm{nc}}(\bbA^\dg_r)$ and $T^l_{\mathrm{nc}}(\bbB^\dg_r)$ hold for every $r$. This implies that the right-hand side of \eqref{eq:conjecture1}, resp. \eqref{eq:conjecture2}, reduces to the conjecture $T^l_{\mathrm{nc}}(\bbC^\dg_L)$, resp. $T^l_{\mathrm{nc}}(\bbC_L^{\dg'})$. On the other hand, since the composed functor $\perf(X_L) \to \bbC_L \to \perf(Y_L)$ is of Fourier-Mukai type, the dg categories $\bbC_L^\dg$ and $\bbC_L^{\dg'}$ are Morita equivalent. Using the fact that the functors \eqref{eq:additive} invert Morita equivalences, this implies that $T^l_{\mathrm{nc}}(\bbC^\dg_L)\Leftrightarrow T^l_{\mathrm{nc}}(\bbC_L^{\dg'})$. Finally, since $X_L$ and $Y_L$ are smooth projective $k$-schemes, the proof follows now from the equivalences $T^l(X_L) \Leftrightarrow T^l_{\mathrm{nc}}(\perf_\dg(X_L))$ and $T^l(Y_L) \Leftrightarrow T^l_{\mathrm{nc}}(\perf_\dg(Y_L))$ of Theorem \ref{thm:main}.
\begin{proposition}\label{prop:additive}
The above functors \eqref{eq:additive} are additive invariants of dg categories in the sense of \cite[Def.~2.1]{book}.
\end{proposition}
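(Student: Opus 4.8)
The plan is to show that each functor $E_n\colon \cA \mapsto \pi_{-1} L_{KU} K(\cA \otimes_k k_n)$ satisfies the three defining properties of an additive invariant from \cite[Def.~2.1]{book}: (a) it sends Morita equivalences to isomorphisms; (b) it sends semi-orthogonal decompositions $\cA = \langle \cB, \cC\rangle$ to direct sums $E_n(\cA) \simeq E_n(\cB) \oplus E_n(\cC)$ (equivalently, it preserves filtered colimits and sends short exact sequences of dg categories to split short exact sequences — or, in the homotopy-invariant formulation, it factors through the universal additive invariant $\mathcal{U}_{\mathsf{A}}$); and (c) whatever normalization/finiteness condition \loccit{} requires (typically that $E_n(0) = 0$, which is immediate). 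The strategy is to build $E_n$ as a composite of three operations each of which is already known to be compatible with the additive structure, and then invoke the fact that additive invariants are closed under such composites.

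First I would observe that base change $-\otimes_k k_n\colon \dgcat(k) \to \dgcat(k_n)$ is a symmetric monoidal functor that preserves Morita equivalences and sends a semi-orthogonal decomposition over $k$ to one over $k_n$ (this is standard and already used implicitly in the excerpt, e.g.\ via the Morita equivalence $\perf_\dg(X)\otimes_k k_n \simeq \perf_\dg(X_{k^n})$). Next, nonconnective algebraic $K$-theory $K(-)\colon \dgcat(k_n) \to \Spt$ is the prototypical example of a localizing (hence additive) invariant with values in spectra — this is Schlichting's theorem, recalled in \cite{book} — so it sends Morita equivalences to equivalences and semi-orthogonal decompositions to (split) cofiber sequences of spectra, i.e.\ to direct sum decompositions up to homotopy. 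Then $L_{KU}(-)\colon \Spt \to \Spt$, Bousfield localization at $KU$, is an exact (triangulated) functor of stable $\infty$-categories, so it preserves finite coproducts/cofiber sequences and equivalences. Finally $\pi_{-1}\colon \Spt \to \mathrm{Ab}$ is an additive functor that preserves arbitrary direct sums and sends a split cofiber sequence of spectra to a (split) short exact sequence of abelian groups — here it is important that the semi-orthogonal decompositions in play yield \emph{split} cofiber sequences, so that the long exact sequence on homotopy groups breaks up and $\pi_{-1}$ of a direct sum really is the direct sum of the $\pi_{-1}$'s, with no contribution from $\pi_{-2}$ or $\pi_0$. Assembling these, $E_n$ inherits Morita-invariance from the first two stages and the additivity (semi-orthogonal $\Rightarrow$ direct sum) from all four, so it is an additive invariant in the sense of \cite[Def.~2.1]{book}; this is exactly the conclusion used in \eqref{eq:group1}--\eqref{eq:group2}.

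The main obstacle I anticipate is pinning down that $\pi_{-1} L_{KU} K(-)$ lands in \emph{abelian groups} in an additive way rather than merely giving a functor to spectra: an abstract additive invariant in \loccit{} is usually formulated as a functor to a stable or additive category factoring through $\mathcal{U}_{\mathsf{A}}$, and one must check that post-composing the spectrum-valued invariant $L_{KU}K(-)$ with the non-exact functor $\pi_{-1}$ still qualifies. The point is that additivity only demands that $E_n$ turn a semi-orthogonal decomposition into a \emph{direct sum} (property (b) in the sense relevant to \cite[Prop.~2.2]{book}), and since the associated cofiber sequence of $K$-theory spectra is split, applying $L_{KU}$ and then $\pi_{-1}$ preserves that splitting — so no genuine exactness of $\pi_{-1}$ is needed, only that it is additive on (finite) coproducts of spectra, which it is. A secondary point to check is that the whole pipeline is functorial in dg categories, i.e.\ that $L_{KU}$ can be applied functorially (Bousfield localization is a functor on the stable homotopy category, and on the $\infty$-categorical level a left adjoint, so this is fine). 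Once these bookkeeping points are settled, the proposition follows by combining the cited additivity of algebraic $K$-theory with the exactness of localization and the additivity of $\pi_{-1}$.
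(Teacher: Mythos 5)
Your proposal is correct and follows essentially the same route as the paper: base change $-\otimes_k k_n$ preserves Morita equivalences (and the additivity data), algebraic $K$-theory is an additive invariant, and $L_{KU}$ and $\pi_{-1}$ preserve the resulting equivalences/splittings. The only cosmetic difference is that the paper verifies the definition via the upper triangular dg category $T(\cA,\cB;\mathrm{B})$ rather than via semi-orthogonal decompositions, which is an equivalent formulation of the additivity axiom.
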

\begin{proof}
Let $F\colon \cA \to \cB$ be a Morita equivalence; see \cite[Def.~1.3.6]{book}. As proved in \cite[Prop.~7.1]{Artin}, the induced dg functor $F\otimes_k k_n \colon \cA\otimes_k k_n \to \cB \otimes_k k_n$ is also a Morita equivalence. Therefore, since algebraic $K$-theory inverts Morita equivalences (see \cite[\S2.2.1]{book}), the induced homomorphism $K(F\otimes_k k_n) \colon K(\cA\otimes_k k_n) \to K(\cB \otimes_k k_n)$ is invertible. By definition of the above functors \eqref{eq:additive}, we hence conclude that the induced group homomorphism $E_n(\cA) \to E_n(\cB)$ is also invertible.

Now, let $\cA$ and $\cB$ be two (small) dg categories, and $\mathrm{B}$ a dg $\cA\text{-}\cB$-bimodule. Consider the dg category $T(\cA,\cB;\mathrm{B})$ whose set of objects is $\mathrm{obj}(\cA)\amalg \mathrm{obj}(\cB)$, whose dg $k$-modules of morphisms defined as follows 
$$ T(\cA, \cB; \mathrm{B})(x,y):=\begin{cases}
\cA(x,y) & \text{if}\,\,\, x, y \in \cA \\
\cB(x,y) & \text{if}\,\,\, x, y \in \cB \\
\mathrm{B}(x,y) & \text{if}\,\,\, x \in \cA\,\,\text{and}\,\, y \in \cB \\
0 & \text{if}\,\,\, x \in \cB\,\,\text{and}\,\, y \in \cA \,,\\
\end{cases}
$$
and whose composition law is induced by the composition law of $\cA$ and $\cB$ and by the dg $\cA\text{-}\cB$-bimodule structure of $\mathrm{B}$. Note that, by construction, we have canonical dg functors $\iota_\cA \colon \cA \to T(\cA, \cB;\mathrm{B})$ and $\iota_\cB\colon \cB \to T(\cA, \cB; \mathrm{B})$. Under these notations, we need to show that the dg functors $\iota_\cA$ and $\iota_B$ induce an isomorphism
\begin{equation}\label{eq:iso-induced}
E_n(\cA) \oplus E_n(\cB) \too E_n(T(\cA,\cB;\mathrm{B}))\,.
\end{equation}
Consider the dg categories $\cA\otimes_k k_n$ and $\cB \otimes_k k_n$ and the dg $(\cA\otimes_k k_n)\text{-}(\cB\otimes_k k_n)$-bimodule $\mathrm{B}\otimes_k k_n$. Since algebraic $K$-theory is an additive invariant of dg categories, the dg functors $\iota_{\cA\otimes_k k_n}$ and $\iota_{\cB \otimes_k k_n}$ induce an isomorphism
\begin{equation}\label{eq:isom-last}
K(\cA\otimes_k k_n)\oplus K(\cB \otimes_k k_n) \stackrel{\simeq}{\too} K(T(\cA\otimes_k k_n, \cB\otimes_k k_n; \mathrm{B}\otimes_k k_n))\,.
\end{equation}
Therefore, by definition of the above functors \eqref{eq:additive}, we conclude from \eqref{eq:isom-last} that the homomorphism \eqref{eq:iso-induced} is also invertible. This concludes the proof.
\end{proof}
\section{Proof of Proposition \ref{prop:key}}
As proved in \cite[Thms.~1.3 and 1.7]{VdB}, the dg category $\perf_\dg(\mathrm{Gr}(r,U_1))$ is Morita equivalent to a finite dimensional $k$-algebra of finite global dimension $A$. Since $\bbA_0^\dg=\perf_\dg(\mathrm{Gr}(r,U_1))$, we hence obtain the following equivalences of conjectures:
\begin{equation}\label{eq:equivalence-1}
T^l_{\mathrm{nc}}(\bbA_0^\dg) = T^l_{\mathrm{nc}}(\perf_\dg(\mathrm{Gr}(r,U_1))) \Leftrightarrow T^l_{\mathrm{nc}}(A)\,.
\end{equation}
Recall that a finite field $k$ is, in particular, perfect. Therefore, using the fact that the above functors \eqref{eq:additive} are additive invariants of dg categories, we conclude from \cite[Thm.~3.15]{Azumaya} that $E_n(A)\simeq E(A/J(A))$, where $J(A)$ stands for the Jacobson radical of $A$. This implies that $T^l_{\mathrm{nc}}(A)\Leftrightarrow T^l_{\mathrm{nc}}(A/J(A))$. Now, let $V_1, \ldots, V_m$ be the simple (right) $A/J(A)$-modules and $D_1:=\mathrm{End}_{A/J(A)}(V_1), \ldots, D_m:=\mathrm{End}_{A/J(A)}(V_m)$ the associated division $k$-algebras. Thanks to the Artin-Wedderburn theorem, the quotient $A/J(A)$ is Morita equivalent to $D_1 \times \cdots \times D_m$. Moreover, the center $Z_i$ of $D_i$ is a finite field extension of $k$ and $D_i$ is a central simple $Z_i$-algebra. Since the Brauer group of a finite field is trivial, we hence conclude that $D_1 \times \cdots \times D_m$ is Morita equivalent to $Z_1 \times \cdots \times Z_m$. This implies the following equivalences:
\begin{equation}\label{eq:equivalence-2}
T^l_{\mathrm{nc}}(A) \Leftrightarrow T^l_{\mathrm{nc}}(A/J(A)) \Leftrightarrow T^l_{\mathrm{nc}}(D_1) + \cdots + T^l_{\mathrm{nc}}(D_m) \Leftrightarrow T^l(Z_1) + \cdots + T^l(Z_m)\,.
\end{equation}
The proof follows now from the combination of \eqref{eq:equivalence-1}-\eqref{eq:equivalence-2} with the fact that, since $\mathrm{dim}(Z_i)=0$, the Tate conjectures $T^l(Z_i), 1 \leq i \leq m$, hold.
\section{Proof of Theorem \ref{thm:two}}\label{sec:proof}
We assume first that $d$ is even. Following \cite[\S3.5]{Kuznetsov-quadrics} (see also \cite[\S1.6]{Bernardara}), let $\cZ$ be the center of $\mathcal{C}l_0(q)_{|L}$ and $\mathrm{Spec}(\cZ)=:\widetilde{\bbP}(L) \to \bbP(L)$ the {\em discriminant cover} of $\bbP(L)$. As explained in {\em loc. cit.}, $\widetilde{\bbP}(L) \to \bbP(L)$ is a $2$-fold cover which is ramified over the divisor $D:=\bbP(L) \cap \Delta_1$. Since by assumption $\mathrm{dim}(L)=2$, we have $\mathrm{dim}(D)=0$. Consequently, since $D$ is smooth, $\widetilde{\bbP}(L)$ is also smooth. Let us write $\cG$ for the sheaf of noncommutative algebras $\mathcal{C}l_0(q)_{|L}$ considered as a sheaf of noncommutative algebras over $\widetilde{\bbP}(L)$. As proved in {\em loc. cit.}, since by assumption $\bbP(L)\cap \Delta_2=\emptyset$, $\cG$ is a sheaf of Azumaya algebras over $\widetilde{\bbP}(L)$ of rank $2^{(d/2)-1}$. Moreover, the category $\perf(\bbP(L); \mathcal{C}l_0(q)_{|L})$ is equivalent (via a Fourier-Mukai type functor) to $\perf(\widetilde{\bbP}(L); \cG)$. This leads to a Morita equivalence between the dg categories $\perf_\dg(\bbP(L); \mathcal{C}l_0(q)_{|L})$ and $\perf_\dg(\widetilde{\bbP}(L); \cG)$. Consequently, making use of Corollary \ref{cor:main}, we obtain the following equivalence of conjectures:
\begin{equation}\label{eq:equivalence-new}
T^l(X_L)\Leftrightarrow T^l_{\mathrm{nc}}(\perf_\dg(\widetilde{\bbP}(L); \cG))\,.
\end{equation}
Since by assumption $\mathrm{dim}(L)=2$, the $2$-fold cover $\widetilde{\bbP}(L)$ is a smooth projective curve. Using the fact that the Brauer group of every smooth curve over a finite field is trivial (see \cite[page 109]{Milne}), we hence conclude that the right-hand side of \eqref{eq:equivalence-new} is equivalent to $T^l_{\mathrm{nc}}(\perf_\dg(\widetilde{\bbP}(L)))\Leftrightarrow T^l(\widetilde{\bbP}(L))$. The proof follows now from the fact that the Tate conjecture holds for smooth projective curves. 

We now assume that $d$ is odd and that $p\neq 2$. Following \cite[\S3.6]{Kuznetsov-quadrics} (see also \cite[\S1.7]{Bernardara}), let $\widehat{\bbP}(L)$ be the {\em discriminant stack} associated to the pull-back $q_{|L}$ along $\bbP(L) \subset \bbP(S^2W^\ast)$ of the flat quadric fibration $q\colon \cH \to \bbP(S^2W^\ast)$. As explained in {\em loc. cit.}, since by assumption $1/2 \in k$, $\widehat{\bbP}(L)$ is a smooth Deligne-Mumford stack. Moreover, using the fact that, by construction, $\widehat{\bbP}(L)$ is a square root stack and that the critical locus of the flat quadric fibration $q_{|L}$ is the divisor $D$, we conclude from \cite[Thm.~1.6]{Ueda} that the category $\perf(\widehat{\bbP}(L))$ admits a semi-orthogonal decomposition $\langle \perf(D), \perf(\bbP(L))\rangle$. Consequently, an argument similar to the one in the proof of Theorem \ref{thm:main} yields the following equivalences of conjectures:
\begin{equation}\label{eq:aux}
T^l(\widehat{\bbP}(L)):=T^l_{\mathrm{nc}}(\perf_\dg(\widehat{\bbP}(L))) \Leftrightarrow T^l(D) + T^l(\bbP(L)) \Leftrightarrow T^l(D)\,.
\end{equation} 
Let us write $\cG$ for the sheaf of noncommutative algebras $\mathcal{C}l_0(q)_{|L}$ considered as a sheaf of noncommutative algebras over $\widehat{\bbP}(L)$. As proved in \cite[\S3.6]{Kuznetsov-quadrics} (see also \cite[\S1.7]{Bernardara}), since by assumption $\bbP(L) \cap \Delta_2=\emptyset$, $\cG$ is a sheaf of Azumaya algebras over $\widehat{\bbP}(L)$. Moreover, the category $\perf(\bbP(L); \mathcal{C}l_0(q)_{|L})$ is equivalent (via a Fourier-Mukai type functor) to $\perf(\widehat{\bbP}(L); \cG)$. This leads to a Morita equivalence between the dg categories $\perf_\dg(\bbP(L); \mathcal{C}l_0(q)_{|L})$ and $\perf_\dg(\widehat{\bbP}(L); \cG)$. Making use of Corollary \ref{cor:main}, we hence obtain the following equivalence of conjectures:
\begin{equation}\label{eq:equivalence-new1}
T^l(X_L) \Leftrightarrow T^l_{\mathrm{nc}}(\perf_\dg(\widehat{\bbP}(L); \cG))\,.
\end{equation}
Since by assumption $\mathrm{dim}(L)=2$ and the Brauer group of every smooth curve over a finite field is trivial, the right-hand side of \eqref{eq:equivalence-new1} is equivalent to $T^l(\widehat{\bbP}(L))$. Consequently, since $\mathrm{dim}(D)=0$, the proof follows now from the combination of \eqref{eq:aux} with the fact that the Tate conjecture holds for $0$-dimensional $k$-schemes.
\subsection*{Intersection of even-dimensional quadrics}
\begin{theorem}\label{prop:2}
Let $X_L$ be as in Corollary \ref{cor:main}. Assume that $\bbP(L) \cap \Delta_2 =\emptyset$, that the divisor $\bbP(L) \cap \Delta_1$ is smooth, and that $d$ is even. Under these assumptions, we have the following equivalence of conjectures $T^l(X_L) \Leftrightarrow T^l(\widetilde{\bbP}(L))$ for every $l \neq 2$.
\end{theorem}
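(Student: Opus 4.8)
The plan is to carry out, step for step, the first half of the proof of Theorem~\ref{thm:two} (the case $d$ even), the only genuine difference being that here $\mathrm{dim}(L)$ is arbitrary, so the discriminant cover $\widetilde{\bbP}(L)$ is in general no longer a curve; consequently its Brauer group need not vanish and the twist by the even Clifford algebra cannot simply be discarded as it was there. I would instead eliminate that twist by a localization argument at the prime~$2$.

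Concretely, I would first invoke Corollary~\ref{cor:main} to reduce to the equivalence $T^l(X_L)\Leftrightarrow T^l_{\mathrm{nc}}(\perf_\dg(\bbP(L);\mathcal{C}l_0(q)_{|L}))$. Then, following \cite[\S3.5]{Kuznetsov-quadrics} (see also \cite[\S1.6]{Bernardara}) exactly as in the proof of Theorem~\ref{thm:two}: since $d$ is even, the discriminant cover $\widetilde{\bbP}(L)\to\bbP(L)$ is a $2$-fold cover ramified over $D:=\bbP(L)\cap\Delta_1$; since $D$ is smooth by hypothesis, $\widetilde{\bbP}(L)$ is a smooth projective $k$-scheme; since moreover $\bbP(L)\cap\Delta_2=\emptyset$, the sheaf $\cG:=\mathcal{C}l_0(q)_{|L}$, considered as a sheaf of noncommutative algebras over $\widetilde{\bbP}(L)$, is a sheaf of Azumaya algebras of rank $2^{(d/2)-1}$, and $\perf(\bbP(L);\mathcal{C}l_0(q)_{|L})$ is equivalent, through a Fourier--Mukai type functor, to $\perf(\widetilde{\bbP}(L);\cG)$. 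This yields a Morita equivalence $\perf_\dg(\bbP(L);\mathcal{C}l_0(q)_{|L})\simeq\perf_\dg(\widetilde{\bbP}(L);\cG)$, and hence $T^l(X_L)\Leftrightarrow T^l_{\mathrm{nc}}(\perf_\dg(\widetilde{\bbP}(L);\cG))$.

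The crux is to pass from $\perf_\dg(\widetilde{\bbP}(L);\cG)$ to $\perf_\dg(\widetilde{\bbP}(L))$. Put $m:=2^{(d/2)-1}$; since $l\neq 2$ we have $l\nmid m$, so Lemma~\ref{lem:aux} replaces the conjectures $T^l_{\mathrm{nc}}(\perf_\dg(\widetilde{\bbP}(L);\cG))$ and $T^l_{\mathrm{nc}}(\perf_\dg(\widetilde{\bbP}(L)))$ by their $1/m$-variants. Now the localized functors $E_n(-)_{1/m}$ are additive invariants of dg categories with values in $\bbZ[1/m]$-modules, and I would invoke the invariance of such invariants under twisting by a sheaf of Azumaya algebras whose rank is invertible in the coefficients (Tabuada--Van den Bergh; see \cite{Azumaya}): since $\cG\otimes_k k_n$ is a sheaf of Azumaya algebras of rank $m$ over $\widetilde{\bbP}(L)_{k_n}$, one obtains $E_n(\perf_\dg(\widetilde{\bbP}(L);\cG))_{1/m}\simeq E_n(\perf_\dg(\widetilde{\bbP}(L)))_{1/m}$ for every $n\geq 1$. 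Applying $\Hom(\bbZ(l^\infty),-)$ then gives $T^l_{\mathrm{nc}}(\perf_\dg(\widetilde{\bbP}(L);\cG);1/m)\Leftrightarrow T^l_{\mathrm{nc}}(\perf_\dg(\widetilde{\bbP}(L));1/m)$, and undoing Lemma~\ref{lem:aux} yields $T^l_{\mathrm{nc}}(\perf_\dg(\widetilde{\bbP}(L);\cG))\Leftrightarrow T^l_{\mathrm{nc}}(\perf_\dg(\widetilde{\bbP}(L)))$.

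Finally, since $\widetilde{\bbP}(L)$ is a smooth projective $k$-scheme, Theorem~\ref{thm:Thomason} gives $T^l_{\mathrm{nc}}(\perf_\dg(\widetilde{\bbP}(L)))\Leftrightarrow T^l(\widetilde{\bbP}(L))$, and chaining all the equivalences above proves $T^l(X_L)\Leftrightarrow T^l(\widetilde{\bbP}(L))$ for every $l\neq 2$. The only non-formal input is the Azumaya-twisting invariance after inverting the rank, and that is precisely where the hypothesis $l\neq 2$ enters; I expect this to be the main obstacle, since it is what controls the possibly non-trivial Brauer class carried by $\cG$ on the higher-dimensional cover $\widetilde{\bbP}(L)$ (in the case $\mathrm{dim}(L)=2$ of Theorem~\ref{thm:two} this was bypassed using triviality of the Brauer group of a smooth curve over a finite field).
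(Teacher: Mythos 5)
Your proposal is correct and follows essentially the same route as the paper: reduce via Corollary \ref{cor:main} and the Fourier--Mukai/Morita equivalence to $T^l_{\mathrm{nc}}(\perf_\dg(\widetilde{\bbP}(L);\cG))$, then remove the Azumaya twist by applying \cite[Thm.~2.1]{Azumaya} to the localized additive invariants and transfer back with Lemma \ref{lem:aux}. The only cosmetic difference is that you invert $2^{(d/2)-1}$ where the paper inverts $2$, which amounts to the same thing.
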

\begin{proof}
Similarly to the proof of Theorem \ref{thm:main}, we have the equivalence of conjectures
\begin{equation}\label{eq:equivalence-adapted}
T^l(X_L)\Leftrightarrow T^l_{\mathrm{nc}}(\perf_\dg(\widetilde{\bbP}(L); \cG))\,,
\end{equation}
where $\cG$ is a certain sheaf of Azumaya algebras over $\widetilde{\bbP}(L)$ of rank $2^{(d/2)-1}$. Consider the following functors ($n\geq 1$) with values in the category of $\bbZ[1/2]$-modules:
\begin{eqnarray}\label{eq:functors-2}
& E_n(-)_{1/2}\colon \dgcat(k) \too \mathrm{Mod}(\bbZ[1/2]) & \cA \mapsto (\pi_{-1}L_{KU} K(\cA\otimes_k k_n))_{1/2}\,.
\end{eqnarray}
As in the proof of Proposition \ref{prop:additive}, the functors \eqref{eq:functors-2} are additive invariants of dg categories. Consequently, since the rank of the sheaf of Azumaya algebras $\cG$  is a power of $2$ and the category $\mathrm{Mod}(\bbZ[1/2])$ is $\bbZ[1/2]$-linear, \cite[Thm.~2.1]{Azumaya} implies that the induced morphisms $E_n(\perf_\dg(\widetilde{\bbP}(L)))_{1/2} \to E_n(\perf_\dg(\widetilde{\bbP}(L); \cG))_{1/2}$, $n \geq 1$, are invertible. This yields the following equivalences of conjectures:
\begin{equation*}
T^l_{\mathrm{nc}}(\perf_\dg(\widetilde{\bbP}(L); \cG); 1/2) \Leftrightarrow T^l_{\mathrm{nc}}(\perf_\dg(\widetilde{\bbP}(L)); 1/2)) \Leftrightarrow T^l(\widetilde{\bbP}(L); 1/2)\,.
\end{equation*}
Making use of Lemma \ref{lem:aux}, we hence obtain the following equivalence:
\begin{eqnarray}\label{eq:equivalence-last}
T^l_{\mathrm{nc}}(\perf_\dg(\widetilde{\bbP}(L); \cG)) \Leftrightarrow T^l(\widetilde{\bbP}(L)) && \forall\,\, l \neq 2\,.
\end{eqnarray}
The proof follows now from the combination of the equivalences \eqref{eq:equivalence-adapted} and \eqref{eq:equivalence-last}.
\end{proof}
\begin{remark}[Azumaya algebras]\label{rk:Azumaya}
Let $X$ be a smooth projective $k$-scheme and $\cG$ a sheaf of Azumaya algebras over $X$ of rank $r$. Similarly to the proof of the above equivalence \eqref{eq:equivalence-last}, we have $T^l_{\mathrm{nc}}(\perf_\dg(X;\cG))\Leftrightarrow T^l(X)$ for every $l \nmid r$. As illustrated in (the proof of) Theorem \ref{prop:twisted} below, such an equivalence does {\em not} holds more generally in the case of a sheaf of Azumaya algebras over a global orbifold. 
\end{remark}

\section{Proof of Theorem \ref{thm:orbifold}}
Consider the functors with values in the category of $\bbZ[1/m]$-modules ($n\geq 1$):
\begin{eqnarray}\label{eq:functors-3}
& E_n(-)_{1/m}\colon \dgcat(k) \too \mathrm{Mod}(\bbZ[1/m]) & \cA \mapsto (\pi_{-1}L_{KU} K(\cA\otimes_k k_n))_{1/m}\,.
\end{eqnarray}
Similarly to the proof of Proposition \ref{prop:additive}, these functors are additive invariants of dg categories. We start by proving the first claim. Note that $p\nmid m$ if and only if $1/m \in k$. Therefore, since the functors \eqref{eq:functors-3} are additive invariants of dg categories and the category $\mathrm{Mod}(\bbZ[1/m])$ is $\bbZ[1/m]$-linear, we conclude from \cite[Thm.~1.1 and Rk.~1.4]{Orbifold} that $E_n(\perf_\dg(\cX))_{1/m}$ is a direct summand of the $\bbZ[1/m]$-module $\bigoplus_{\sigma\subseteq G} E_n(\perf_\dg(X^\sigma \times \mathrm{Spec}(k[\sigma])))_{1/m}$. This leads to the implication: 
\begin{equation}\label{eq:implication-aux1}
\sum_{\sigma \subseteq G} T^l(X^\sigma \times \mathrm{Spec}(k[\sigma]); 1/m) \Rightarrow T^l(\cX; 1/m)\,.
\end{equation}
By combining \eqref{eq:implication-aux1} with Lemma \ref{lem:aux}, we hence obtain the implication \eqref{eq:implication-main}.

Let us now prove the second claim. We start with the following remark:
\begin{remark}[Divisors]\label{rk:last} 
Given a smooth projective $k$-scheme $X$, consider the Tate conjecture for divisors $T^{l,1}(X)$: {\em The cycle class map \eqref{eq:class-map} with $\ast=1$ is surjective}. As proved by Tate in \cite[Prop.~5.1]{Tate-motives}, when $\mathrm{dim}(X)\leq 3$, we have $T^l(X) \Leftrightarrow T^{l, 1}(X)$. 
\end{remark}
If by assumption $\mathrm{dim}(X)\leq 3$, the dimension of the smooth projective $k$-schemes $X^\sigma$ and $X^\sigma \times \mathrm{Spec}(k[\sigma])$ is also $\leq 3$. Therefore, by combining Remark \ref{rk:last} with the following equivalences of conjectures (see \cite[Thm.~5.2]{Tate-motives})
$$ T^{l,1}(X^\sigma \times \mathrm{Spec}(k[\sigma]))\Leftrightarrow T^{l,1}(X^\sigma) + T^{l,1}(\mathrm{Spec}(k[\sigma]))\Leftrightarrow T^{l,1}(X^\sigma)\,,$$
we conclude that $T^l(X^\sigma \times \mathrm{Spec}(k[\sigma])) \Leftrightarrow T^l(X^\sigma)$. 

Assume now that $m\mid (q-1)$. Note that $m\mid (q-1)$ if and only if $k$ contains the the $m^{\mathrm{th}}$ roots of unity. Therefore, since $1/m \in k$, since the functors \eqref{eq:functors-3} are additive invariants of dg categories, and since the category $\mathrm{Mod}(\bbZ[1/m])$ is $\bbZ[1/m]$-linear, we conclude from \cite[Cor.~1.6(i)]{Orbifold} that $E_n(\perf_\dg(\cX))_{1/m}$ is a direct summand of the $\bbZ[1/m]$-module $\bigoplus_{\sigma \subseteq G} E_n(\perf_\dg(X^\sigma))^{\oplus r_\sigma}_{1/m}$, where $r_\sigma \geq 1$ are certain integers. This leads to the following implication of conjectures: 
\begin{equation}\label{eq:implication-aux}
\sum_{\sigma \subseteq G} T^l(X^\sigma; 1/m) \Rightarrow T^l(\cX; 1/m)\,.
\end{equation}
By combining \eqref{eq:implication-aux} with Lemma \ref{lem:aux}, we hence obtain the searched implication of conjectures $\sum_{\sigma \subseteq G} T^l(X^\sigma) \Rightarrow T^l(\cX)$, $\forall\,\, l\nmid m$.
\section*{Proof of Theorem \ref{prop:twisted}}
Since by assumption $m\mid (q-1)$, $k$ contains the $m^{\mathrm{th}}$ roots of unity and $1/m \in k$. Therefore, using the fact that the above functors \eqref{eq:functors-3} (with $m$ replaced by $mr$) are additive invariants of dg categories and that the category $\mathrm{Mod}(\bbZ[1/mr])$ is $\bbZ[1/mr]$-linear, we conclude from \cite[Cor.~1.29(ii)]{Orbifold} that $E_n(\perf_\dg([S/G]; \cG))_{1/mr}$ is a direct summand of the $\bbZ[1/mr]$-module $\bigoplus_{\sigma \subseteq G} E_n(\perf_\dg(Y_\sigma))_{1/mr}$, $n \geq 1$, where $Y_\sigma$ is a certain $\sigma^\vee$-Galois cover of $S^\sigma$. This leads to the implication of conjectures: 
\begin{equation}\label{eq:implication-big}
\sum_{\sigma \subseteq G} T^l(Y_\sigma; 1/mr) \Rightarrow T_{\mathrm{nc}}^l(\perf_\dg([S/G];\cG); 1/mr)\,.
\end{equation}
By combining \eqref{eq:implication-big} with Lemma \ref{lem:aux}, we hence obtain the implication:
\begin{eqnarray}\label{eq:implication-big1}
 \sum_{\sigma \subseteq G} T^l(Y_\sigma) \Rightarrow T_{\mathrm{nc}}^l(\perf_\dg([S/G];\cG)) && \forall\,\, l \nmid mr\,.
\end{eqnarray}
Now, since by assumption the $G$-action is faithful, we have $\mathrm{dim}(Y_\sigma)=\mathrm{dim}(S^\sigma)\leq 1$ for every non-trivial cyclic subgroup $\sigma$ of $G$. Consequently, \eqref{eq:implication-big1} reduces to the searched implication of conjectures $T^l(S) \Rightarrow T_{\mathrm{nc}}^l(\perf_\dg([S/G];\cG))$, $\forall\,\, l \nmid mr$.
%

\medbreak\noindent\textbf{Acknowledgments:} The author is grateful to Bruno Kahn for his interest in these results/arguments and for the reference \cite{Reid}. The author also would like to thank the Hausdorff Research Institute for Mathematics for its hospitality.

\end{document}

\end{proof}